\let\oldmarginpar\marginpar
\renewcommand\marginpar[1]
\newcommand{\la}{\langle}
\newcommand{\ra}{\rangle}
\newtheorem{theorem}{\bf Theorem}[section]
\newtheorem{lemma}[theorem]{\bf Lemma}
\renewcommand{\AA}{{\Bbb A}}
\newcommand{\CC}{{\Bbb C}}
\newcommand{\FF}{{\Bbb F}}
\newcommand{\NN}{{\Bbb N}}
\newcommand{\RR}{{\Bbb R}}
\newcommand{\ZZ}{{\Bbb Z}}
\newcommand{\ggreat}{>\kern-.7ex>}
\newcommand{\ssmall}{<\kern-.7ex<}
\newcommand{\qu}{/\kern-.7ex/}
\newcommand{\exh}{\to\kern-1.8ex\to}
\newcommand{\cC}{{\EuScript{C}}}
\newcommand{\dD}{{\EuScript{D}}}
\newcommand{\gG}{{\EuScript{G}}}
\newcommand{\GL}{\operatorname{GL}}
\newcommand{\Aut}{\operatorname{Aut}}
\newcommand{\Diff}{\operatorname{Diff}}
\newcommand{\stable}{\operatorname{st}}
\newcommand{\Id}{\operatorname{Id}}
\newcommand{\Ker}{\operatorname{Ker}}
\newcommand{\Spec}{\operatorname{Spec}}
\newcommand{\un}{\underline}
\newcommand{\vt}{\vartriangleleft}
\title[Manifolds without odd cohomology and almost fixed points]
{Almost fixed points of finite group actions on manifolds without odd cohomology}
\author{Ignasi Mundet i Riera}
\address{Departament d'\`Algebra i Geometria\\
Facultat de Matem\`atiques\\
Universitat de Barcelona\\
Gran Via de les Corts Catalanes 585\\
08007 Barcelona \\
Spain}
\email{ignasi.mundet@ub.edu}
\date{\today}
\subjclass[2010]{57S17,54H15}
\thanks{This work has been partially supported by the (Spanish) MEC Projects MTM2012-38122-C03-02
and MTM2015-65361-P}
\begin{document}

\maketitle

\begin{abstract}
If $X$ is a smooth manifold and $\gG$ is a subgroup of $\Diff(X)$
we say that $(X,\gG)$ has the almost fixed point property if there exists
a number $C$ such that for
any finite subgroup $G\leq\gG$ there is some $x\in X$ whose stabilizer
$G_x\leq G$ satisfies $[G:G_x]\leq C$.
We say that $X$ has no odd cohomology if its integral cohomology is torsion free
and supported in even degrees. We prove that if $X$ is compact and possibly
with boundary and has no odd cohomology then $(X,\Diff(X))$ has the almost fixed
point property. Combining this with a result of Petrie and Randall we conclude
that if $Z$ is a non necessarily compact
smooth real affine variety,
and $Z$ has no odd cohomology, then $(Z,\Aut(Z))$ has the almost fixed point
property, where $\Aut(Z)$ is the group of algebraic automorphisms of $Z$
lifting the identity on $\Spec\RR$.
\end{abstract}

\section{Introduction}

\subsection{Smooth actions}
Let $X$ be a smooth manifold, possibly with boundary, and let $\gG$
be a subgroup of $\Diff(X)$. We say that $(X,\gG)$ has the fixed point property if for any finite subgroup $G\leq \gG$ the fixed point set $X^G$ is nonempty.

For which manifolds $X$ does the pair $(X,\Diff(X))$ have the fixed point property? This is trivially the case for asymmetric manifolds i.e., manifolds which do not admit an effective action of any nontrivial finite group (there exist many examples of asymmetric manifolds, see e.g. \cite{Bloom,CRW,Puppe}). The question is more interesting if one further requires that $\Diff(X)$ contains nontrivial finite groups, preferably of arbitrarily big size. If $D^n$ denotes the closed $n$-dimensional disk, then $(D^n,\Diff(D^n))$ has the fixed point property if $n\leq 4$ (this is an easy exercise for $n\leq 2$, and is much less obvious if $n$ is $3$ or $4$, see \cite{BKS}).
However, $(D^n,\Diff(D^n))$ does not have the fixed point property if $n\geq 6$: for any $n\geq 6$ there is a smooth effective action of the alternating group $A_5$ on $D^n$ (see \cite{BM}, and note that
a one-fixed-point action on $S^n$ induces a fixed point free action on $D^n$ by removing an invariant open ball in $S^n$ centered at the fixed point).
Similarly $(\RR^n,\Diff(\RR^n))$ has the fixed point property
if $n\leq 3$ and it does not have it if $n\geq 5$, see \cite{BKS}.

Given the previous results, it seems reasonable to expect that there are very few non-asymmetric manifolds $X$ such that $(X,\Diff(X))$ has the fixed point property. This motivates us to consider the following weaker notion.

We say that $(X,\gG)$ has the {\bf almost fixed point property} if there exists a constant $C$ such that for every finite subgroup
$G\leq\gG$ there exists a point $x\in X$ whose stabilizer in $G$,
say $G_x$, satisfies $[G:G_x]\leq C$.
If for a given manifold $X$ there is an upper bound on the size of the finite subgroups
of $\Diff(X)$ then $(X,\Diff(X))$ trivially has the almost fixed point property.
Our main result gives infinitely many examples of compact manifolds $X$ such that $\Diff(X)$ has
arbitrarily big finite subgroups and yet $(X,\Diff(X))$ has the almost fixed point property.

We say that a compact manifold $X$ has no odd cohomology if its integral cohomology is torsion free and
supported in even degrees. This implies that the Euler characteristic of any connected component
of $X$ is strictly positive.
Note that if $X$ is orientable and closed then the assumption that
$H^*(X;\ZZ)$ is supported in even degrees implies, by Poincar\'e duality and the universal
coefficient theorem, that the cohomology is torsion free.

\begin{theorem}
\label{thm:punts-quasi-fixos}
Let $X$ be a
smooth compact manifold, possibly with boundary, without odd
cohomology. Then $(X,\Diff(X))$ has the almost fixed point property.
More precisely, there exists a constant $C$, depending only on the dimension of $X$
and on $H^*(X;\ZZ)$, such that any finite group
$G$ acting smoothly on $X$ has a subgroup $G_0$ satisfying $[G:G_0]\leq C$ and
$\chi(Y^{G_0})=\chi(Y)$ for every connected component $Y\subseteq X$.
\end{theorem}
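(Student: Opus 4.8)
The plan is to organize everything around the identity $\chi(X^g)=L(g)=\sum_i(-1)^i\Tr\bigl(g^*\mid H^i(X;\QQ)\bigr)$ coming from the Lefschetz--Hopf theorem, applied along a short tower of fixed-point submanifolds, after first rigidifying the action on cohomology. Two preliminary reductions cost only bounded index. Since $G$ permutes the connected components of $X$ and their number is determined by $\rk H^0(X;\ZZ)$, I pass to the subgroup preserving each component, treat components one at a time, and reassemble at the end by intersecting the (boundedly many) resulting subgroups; so I may assume $X$ connected with $\chi(X)=\rk H^*(X;\ZZ)=:b>0$. Next, the action of $G$ on $H^*(X;\ZZ)\cong\ZZ^N$ gives a finite subgroup of $\GL_N(\ZZ)$, which by Minkowski's lemma (the reduction $\GL_N(\ZZ)\to\GL_N(\ZZ/3)$ is injective on finite subgroups) has order at most a constant $M=M(b)$; I call this the Minkowski bound and record that for any finite-order $g$ acting trivially on $H^*(X;\QQ)$ one gets $\chi(X^g)=\chi(X)$ at once.

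The clean core is the following statement, proved by induction on the minimal number of generators $d(A)$ of a finite \emph{abelian} group $A$ acting on $X$: there is $A_0\leq A$ with $[A:A_0]\leq M^{d(A)}$ and $\chi(X^{A_0})=\chi(X)$. At each stage I work inside the current fixed-point submanifold $W=X^B$; replacing $A$ by the kernel $A'$ of its action on $H^*(W;\ZZ)$ costs index at most $M$, and choosing $g$ to generate a cyclic direct factor of $A'$, the Lefschetz step gives $\chi(W^g)=\chi(W)$. Since $g$ fixes $W':=W^g$ pointwise, the $A'$-action on $W'$ factors through $A''=A'/\la g\ra$ with $d(A'')=d(A')-1$, and the inductive hypothesis applied to $A''$ on $W'$ finishes the step, the preimage $A_0\supseteq\la g\ra$ satisfying $X^{A_0}=(W')^{A_0''}$. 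For this to give a genuinely bounded index I need two uniformities. First, $d(A)$ is bounded: the $p$-rank of $A$ is realised by an elementary abelian subgroup acting effectively, so by the Mann--Su bound it is at most a constant $r(X)$ depending only on $\dim X$ and $H^*(X;\FF_p)$, whence $d(A)\leq r(X)$ and $M^{d(A)}\leq M^{r(X)}$. Second, the Minkowski bound must be the single constant $M(b)$ at every stage, which requires that each $W$ again have $\rk H^*(W;\ZZ)\leq b$; I would obtain this from a Smith-theoretic lemma stating that no odd cohomology is inherited by the fixed set of a finite-order map acting trivially on rational cohomology, so that all the $W$ remain torsion-free with total Betti number $b$. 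Establishing this inheritance prime by prime (Smith theory over $\FF_p$, with care about cross-prime torsion of the successive fixed sets) is the one technical point I would single out inside the abelian argument.

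The genuine obstacle is upstream of all this: reducing an arbitrary $G$ to a bounded-index \emph{abelian} subgroup, i.e.\ a Jordan-type property for the action with bounds depending only on $\dim X$ and $H^*(X;\ZZ)$. Trivialising the cohomology action is emphatically not enough, since the fixed point free $A_5$-action on $D^n$ (for $n\geq 6$) acts trivially on $H^*(D^n;\QQ)$ and yet has empty fixed set; nonabelian structure must be controlled by geometry, not just by $H^*(X)$. I would attack this by combining the Minkowski and Mann--Su bounds already used with Jordan's classical theorem applied to the tangent representations of $G$ at fixed points of its $p$-subgroups, such fixed points existing because the Smith congruence $\chi(X^P)\equiv\chi(X)\pmod p$ forces $X^P\neq\emptyset$ whenever $p\nmid\chi(X)$, and then invoking the structure theory of finite groups of bounded rank to pass from a bounded-rank situation to a bounded-index abelian subgroup. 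I expect this reduction to carry the main weight of the proof.

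Granting a bounded-index abelian $A\leq G$, the core statement produces $G_0:=A_0$ with $[G:G_0]\leq[G:A]\cdot M(b)^{r(X)}$ bounded purely in terms of $\dim X$ and $H^*(X;\ZZ)$, and with $\chi(X^{G_0})=\chi(X)$; intersecting over the finitely many components and running the argument on each then yields the uniform constant $C$. Throughout, the hypotheses that $X$ is compact and has no odd cohomology enter exactly where needed: compactness for Smith theory and Lefschetz--Hopf (valid also for manifolds with boundary, whose fixed sets are compact submanifolds with boundary), and the absence of odd cohomology to guarantee $\chi(X)=b>0$, to give $\dim_{\FF_p}H^*(X;\FF_p)=b$ uniformly in $p$, and to seed the inheritance lemma that keeps the tower under control.
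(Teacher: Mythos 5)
There are two genuine gaps. The first is the reduction from arbitrary finite $G$ to abelian $A$. The paper does not prove this either: it invokes the Jordan property of $\Diff(X)$ for compact $X$ with $\chi(X)\neq 0$ from \cite{M4} (which in turn rests on \cite{MT}), with a constant depending only on $\dim X$ and $H^*(X;\ZZ)$, and this is precisely why Theorem \ref{thm:punts-quasi-fixos} reduces to Theorem \ref{thm:main-abelian}. Your sketch (Jordan's classical theorem on tangent representations at fixed points of $p$-subgroups, plus ``structure theory of finite groups of bounded rank'') is a program, not a proof, and it already stumbles at the first step: the Smith congruence $\chi(X^P)\equiv\chi(X)\pmod p$ only produces fixed points of $p$-subgroups for primes $p\nmid\chi(X)$, so the finitely many primes dividing $\chi(X)$ need a separate mechanism. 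Since you yourself say you expect this reduction to carry the main weight, the proposal as written does not establish the theorem for nonabelian $G$.

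The second gap is in your abelian core and is the one you half-identify as ``cross-prime torsion.'' Your induction applies Minkowski at each stage to the action of the current group on $H^*(W_i;\QQ)$, where $W_i$ is an iterated fixed-point set; the resulting index loss is controlled only if $\rk H^*(W_i;\QQ)$ is bounded in terms of $X$. Smith theory at a prime $p$ bounds $\sum_j b_j(W^{g};\FF_p)$ for $g$ of $p$-power order, but says nothing about $H^*(W^{g};\FF_q)$ for $q\neq p$: the fixed set of an involution on a disk can be any $\FF_2$-acyclic manifold and may carry arbitrarily large odd-prime torsion, so after one more step at an odd prime the rational Betti numbers of $W_{i+1}$ are no longer bounded by those of $X$. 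Hence the ``inheritance lemma'' you would need (fixed sets remain without odd cohomology, or at least of bounded total Betti number at every prime) does not follow from Smith theory and is false in the generality you need it. The paper avoids this entirely: it never applies Minkowski or Lefschetz to an intermediate fixed set. Instead it passes to a $\lambda$-stable subgroup $\Gamma$ of boundedly small index (Lemmas \ref{lemma:chains-inclusions}--\ref{lemma:exists-Gamma-stable-p}, including an Euler-characteristic rigidity statement for $p$-subgroups proved by an orbit-counting argument on a $\Gamma$-good triangulation rather than by Lefschetz), then produces for each prime $p$ a \emph{single element} $\gamma_p\in\Gamma_p$ with $X^{\gamma_p}=X^{\Gamma_p}$ by a counting argument over the characters in the normal bundle, multiplies these to one element $\gamma$ with $X^{\gamma}=X^{\Gamma}$, and applies Lefschetz exactly once, to $\gamma$ acting on $X$ itself, where the no-odd-cohomology hypothesis is available. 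A smaller but real defect of the same kind: your reassembly over connected components by intersecting the subgroups $G_i$ fails formally, because shrinking the group can enlarge a fixed set and change its Euler characteristic; the paper explicitly warns that the general statement is not a formal consequence of the connected case, and its $\lambda$-stability condition (which forces $\chi(X^{\Gamma_0})=\chi(X)$ for \emph{all} subgroups $\Gamma_0$) is what makes the global statement go through.
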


Theorem \ref{thm:punts-quasi-fixos} implies that $G_0$ preserves the connected components of $X$
and for any connected component $Y\subseteq X$ the fixed point set $Y^{G_0}$ is nonempty.
Note that the general statement of Theorem \ref{thm:punts-quasi-fixos} is not a formal consequence of
the particular case in which $X$ is connected.

Theorem \ref{thm:punts-quasi-fixos} implies for example that $(D^n,\Diff(D^n))$ has the almost fixed point property for every $n$.

Compactness is an essential condition in Theorem \ref{thm:punts-quasi-fixos}. Indeed,
the analogous statement for smooth finite group actions on $\RR^n$ is false for $n\geq 7$:
by a theorem of Haynes, Kwasik, Mast and Schultz \cite{HKMS},
if $n,r$ are natural numbers, $n\geq 7$ and $r$ is not a prime power,
then there exists a smooth diffeomorphism of $\RR^n$ of order $r$ without fixed points; taking
$r=pq$ with $p,q$ different primes, it follows that there is a smooth action of $G=\ZZ/r$ on
$\RR^n$ such that for every $x\in\RR^n$ the isotropy group $G_x$ is trivial or isomorphic
to $\ZZ/p$ or $\ZZ/q$; in particular $[G:G_x]\geq\min\{p,q\}$. Since $p,q$ can both be chosen to be
arbitrarily big, Theorem \ref{thm:punts-quasi-fixos} can not be true for actions on $\RR^n$.

\subsection{Algebraic actions}
The fixed point property has also been studied in algebraic geometry.
Given a real affine variety $Z$ we denote by $\Aut(Z)$ the group of algebraic automorphisms of $Z$
lifting the identity on $\Spec\RR$. Let $\AA^n$ denote the affine $n$-space over $\RR$.
A well known conjecture claims that $(\AA^n,\Aut(\AA^n))$ has the fixed point property,
see \cite{PR}. The particular case of complex affine spaces $\AA_{\CC}^n$ is perhaps more popular.
It is conjectured that any complex algebraic action of a reductive group on $\AA_{\CC}^n$ has
a fixed point, see \cite{Kraft89,Kraft}. The particular case of algebraic finite group actions
on $\AA^n_{\CC}$ is also asked in \cite{KraftSchwarz}.
There is an analogous conjecture for actions of finite $p$-groups
on affine spaces over general fields of characteristic prime to $p$ that has been popularized by Serre \cite{S3};
see \cite{Haution} for recent results and references.

The question of whether $(\AA^n,\Aut(\AA^n))$ has the fixed point property is mostly
open at present. If true, it shows a strictly algebraic property of affine
spaces, since there exist examples of real affine varieties $Z$ which are diffeomorphic to an affine space but
for which $(Z,\Aut(Z))$ does not have the fixed point property \cite{DMP}.

Using Theorem \ref{thm:punts-quasi-fixos} we obtain
the following general theorem on algebraic actions of finite groups on smooth real affine varieties,
which implies that $(\AA^n,\Aut(\AA^n))$ has the almost fixed point property for every $n$.

\begin{theorem}
Let $Z$ be a smooth real affine manifold, not necessarily compact, without odd cohomology.
Then $(Z,\Aut(Z))$ has the almost fixed point property.
\end{theorem}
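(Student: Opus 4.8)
The plan is to deduce this from Theorem~\ref{thm:punts-quasi-fixos} by replacing the noncompact $Z$ with a $G$-invariant compact submanifold-with-boundary of the \emph{same} homotopy type, uniformly over all finite $G$. I would fix a finite subgroup $G\leq\Aut(Z)$ and first linearize the action: since the $G$-action is algebraic, a standard argument produces a finite-dimensional $G$-invariant subspace of the coordinate ring $\RR[Z]$ generating it as an algebra, hence a $G$-equivariant closed embedding of $Z$ into a finite-dimensional real representation $V$ of $G$. Averaging an inner product on $V$ over $G$, I may assume this embedding is isometric for a $G$-invariant Euclidean norm $\|\cdot\|$. The function $\rho\colon Z\to\RR_{\geq 0}$, $\rho(z)=\|z\|^2$, is then a proper, $G$-invariant polynomial exhaustion of the real points of $Z$ (properness holds because $Z$ is closed in $V$).

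Next I would invoke the result of Petrie and Randall \cite{PR}: for such an algebraic action, for all sufficiently large regular values $c$ the sublevel set $W:=\rho^{-1}([0,c])$ is a $G$-invariant compact smooth submanifold-with-boundary of $Z$ onto which $Z$ deformation retracts, so that the inclusion $W\hookrightarrow Z$ is a homotopy equivalence. This is where algebraicity is essential: it is the semialgebraic triviality of $\rho$ at infinity (finiteness of topological types) that makes the topology stabilize, and no such invariant compact model can exist for the smooth non-algebraic actions on $\RR^n$ recalled above, since Theorem~\ref{thm:punts-quasi-fixos} applied to it would force a fixed point. Consequently $H^*(W;\ZZ)\cong H^*(Z;\ZZ)$ is torsion-free and supported in even degrees, so $W$ has no odd cohomology; crucially $\dim W=\dim Z$ and $H^*(W;\ZZ)$ depend only on $Z$ and \emph{not} on $G$.

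Finally I would apply Theorem~\ref{thm:punts-quasi-fixos} to the smooth action of $G$ on the compact manifold $W$. It yields a subgroup $G_0\leq G$ with $[G:G_0]\leq C$ and $\chi(Y^{G_0})=\chi(Y)$ for every connected component $Y\subseteq W$, where $C$ depends only on $\dim W$ and $H^*(W;\ZZ)$, hence only on $Z$. Since $W$ has no odd cohomology, each component $Y$ satisfies $\chi(Y)>0$, so $Y^{G_0}\neq\emptyset$; picking any $x\in Y^{G_0}\subseteq Z$ gives $G_0\leq G_x$, whence $[G:G_x]\leq[G:G_0]\leq C$. As the same $C$ works for every finite $G\leq\Aut(Z)$, this is exactly the almost fixed point property of $(Z,\Aut(Z))$.

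The main obstacle is precisely this uniformity of $C$ over all finite $G$: Theorem~\ref{thm:punts-quasi-fixos} only bounds the index in terms of the ambient manifold's dimension and cohomology, and a priori the invariant compact model $W$ could change with $G$. The resolution is to arrange $W\hookrightarrow Z$ to be a homotopy equivalence, so that the invariants controlling $C$ coincide with those of the fixed variety $Z$; securing the existence of such a $G$-invariant compact deformation retract is the technical heart and is supplied by the Petrie--Randall theorem, while the linearization, the properness of $\rho$, and the concluding application of Theorem~\ref{thm:punts-quasi-fixos} are routine.
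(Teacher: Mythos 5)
Your proposal is correct and follows essentially the same route as the paper: both rest on the Petrie--Randall equivariant compactification, the observation that the constant $C$ of Theorem \ref{thm:punts-quasi-fixos} depends only on $\dim Z$ and $H^*(Z;\ZZ)$ and is therefore uniform over all finite $G\leq\Aut(Z)$, and a concluding application of that theorem. The only cosmetic difference is that you realize the compact model as a sublevel set $W\subseteq Z$, so a fixed point in $W$ is automatically a point of $Z$, whereas the paper works with an abstract compactification $X$ satisfying $X\setminus\partial X\cong Z$ and closes with a short neatness argument ($\partial(X^{G_0})=X^{G_0}\cap\partial X$, hence $\chi(X^{G_0}\setminus\partial X)=\chi(X^{G_0})\neq 0$) to ensure the fixed-point set meets the interior.
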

\begin{proof}
If $Z$ is compact then the result follows immediately from Theorem \ref{thm:punts-quasi-fixos}.
Suppose instead that $Z$ is not compact.
Let $G\leq\Aut(Z)$ be a finite subgroup. By \cite[Theorem 4.1]{PR} there exists
a compact smooth manifold with boundary $X$, endowed with a smooth action of $G$,
such that $X\setminus\partial X$ is $G$-equivariantly diffeomorphic to $Z$.
Then $H^*(X;\ZZ)\simeq H^*(Z;\ZZ)$, so $X$ has no odd cohomology. By Theorem
\ref{thm:punts-quasi-fixos} there exists a constant $C$ such that for any finite
subgroup $\Gamma<\Diff(X)$ there exists some $\Gamma_0\leq\Gamma$ such that
$[\Gamma:\Gamma_0]\leq C$ and $\chi(X^{\Gamma_0})=\chi(X)$.
Furthermore, the constant $C$ given by Theorem \ref{thm:punts-quasi-fixos} only depends on the dimension
of $X$ and on $H^*(X;\ZZ)\simeq H^*(Z;\ZZ)$,
and hence it is independent of the compactification $X$ of $Z$.
Take $\Gamma=G$ and let $G_0\leq G$ satisfy
$[G:G_0]\leq C$ and $\chi(X^{G_0})=\chi(X)\neq 0$.
Since $X^{G_0}$ is a neat submanifold of $X$
(see Subsection \ref{ss:notation} and Lemma \ref{lemma:linearization} below),
we have $\partial(X^{G_0})=X^{G_0}\cap\partial X$, so
$\chi(X^{G_0})=\chi(X^{G_0}\setminus\partial X)$ and hence $Z^{G_0}=X^{G_0}\setminus\partial X\neq\emptyset$.
\end{proof}

The previous theorem implies that most of the cyclic group actions on $\RR^7$ constructed in \cite{HKMS}
can not be made algebraic. This also follows more directly from the existence of equivariant
compactifications and a Lefschetz fixed point formula
as in \cite{PR}.

\subsection{Jordan property and actions of abelian groups}

To prove Theorem \ref{thm:punts-quasi-fixos} we first observe,
using results on the Jordan property of diffeomorphism groups,
that it suffices to consider actions of finite abelian groups.
Recall that a group $\gG$ is said to be Jordan \cite{Po0} if there exists a constant
$C$ such that any finite subgroup $G\leq\gG$ has an abelian subgroup $A\leq G$ satisfying
$[G:A]\leq C$. The main result of \cite{M4} implies that if $X$ is a compact manifold, possibly
with boundary and without odd
cohomology, then $\Diff(X)$ is Jordan (actually we only need to assume $\chi(X)\neq 0$ for that).
Furthermore, the constant $C$ in the Jordan property for $\Diff(X)$
can be chosen to depend only on the dimension of $X$ and $H^*(X;\ZZ)$.
Hence, Theorem \ref{thm:punts-quasi-fixos} follows from the next result.

\begin{theorem}
\label{thm:main-abelian}
Let $X$ be a
compact smooth manifold (possibly with boundary) without odd
cohomology. There exists a constant $C$, depending only on the dimension of $X$
and on $H^*(X;\ZZ)$, such that any finite abelian group
$A$ acting smoothly on $X$ contains a subgroup $A_0$ satisfying
$[A:A_0]\leq C$ and $\chi(Y^{A_0})=\chi(Y)$ for every connected component $Y\subseteq X$.
\end{theorem}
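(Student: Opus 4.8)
The plan is to reduce first to an action that is trivial on cohomology, then to propagate the no-odd-cohomology condition down a tower of fixed-point sets on which Lefschetz's theorem can be applied.

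First I would record the action on integral cohomology. Let $N=\rk H^*(X;\ZZ)$ and let $\rho\colon A\to\GL(H^*(X;\ZZ))\cong\GL(N,\ZZ)$ be the induced homomorphism. Its image is a finite abelian subgroup of $\GL(N,\ZZ)$, so by Minkowski's bound on finite subgroups of $\GL(N,\ZZ)$ its order is at most a constant $M(N)$ depending only on $N$, hence only on $H^*(X;\ZZ)$. Set $A_0=\Ker\rho$, so that $[A:A_0]\le M(N)$. Since $A_0$ acts trivially on $H^0(X;\ZZ)$ it preserves every connected component, and since $H^*(X;\ZZ)$ is torsion free it also acts trivially on $H^*(X;\FF_p)$ for all $p$ and on $H^*(X;\QQ)$. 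It therefore suffices to fix one connected component $Y$ and to prove $\chi(Y^{A_0})=\chi(Y)$, where $A_0$ acts on $Y$ trivially on $H^*(Y;\ZZ)$.

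Next I would pick a chain $1=H_0<H_1<\dots<H_m=A_0$ with each quotient $H_i/H_{i-1}$ cyclic of prime order $p_i$, and set $Y_i=Y^{H_i}$, so that $Y=Y_0\supseteq Y_1\supseteq\dots\supseteq Y_m=Y^{A_0}$ and $Y_i$ is the fixed-point set of a generator $g_i$ of $\ZZ/p_i\cong H_i/H_{i-1}$ acting on the neat compact submanifold $Y_{i-1}$ (neatness being guaranteed by Lemma \ref{lemma:linearization}). Because $g_i$ has finite order, $Y_i=Y_{i-1}^{g_i}$ is again a neat submanifold and the Lefschetz fixed point formula gives $\chi(Y_i)=\sum_j(-1)^j\Tr\bigl(g_i^*\mid H^j(Y_{i-1};\QQ)\bigr)$. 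The crucial point is that the Euler-characteristic equality at each step is essentially free: as soon as $g_i$ acts trivially on $H^*(Y_{i-1};\QQ)$, the right-hand side collapses to $\sum_j(-1)^j b_j(Y_{i-1})=\chi(Y_{i-1})$, whence $\chi(Y_i)=\chi(Y_{i-1})$ and, telescoping, $\chi(Y^{A_0})=\chi(Y)$. Thus the entire content of the theorem is packaged into an inheritance statement: each $Y_{i-1}$ still has no odd cohomology, and the residual group still acts trivially on its rational cohomology, so that the Lefschetz step is legitimate at every level. I would establish this by Smith theory and the localization theorem: the vanishing of odd $\FF_{p_i}$-cohomology of $Y_{i-1}$ makes the $\ZZ/p_i$-action equivariantly formal, so $Y_{i-1}$ is totally non-homologous to zero in its Borel construction, $H^*(Y_i;\FF_{p_i})$ is again concentrated in even degrees with $\dim_{\FF_{p_i}}H^*(Y_i;\FF_{p_i})=\dim_{\FF_{p_i}}H^*(Y_{i-1};\FF_{p_i})$, and the naturality of the localization isomorphism with respect to the commuting residual group transports its cohomological triviality from $Y_{i-1}$ to $Y_i$.

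The main obstacle is the integral part of the inheritance, namely excluding torsion in $H^*(Y_i;\ZZ)$ at primes $\ell\neq p_i$. This is genuinely necessary rather than cosmetic: along the tower the acting prime $p_i$ varies, so before the next step I must know that $Y_i$ has no odd cohomology with respect to a possibly different prime $p_{i+1}$, and this forces me to maintain the torsion free integral statement rather than a single mod $p$ statement. Smith theory controls $H^*(Y_i)$ only at the acting prime $p_i$, so the cross-prime torsion must be ruled out separately. I expect to do this by combining the facts already in hand — that $Y_i$ is rationally even with $\dim_\QQ H^*(Y_i;\QQ)=\chi(Y_{i-1})$ and that $\chi(Y_i)=\chi(Y_{i-1})$ — with a finer argument (for instance $\ZZ[1/p_i]$-localization of the equivariant cohomology, or a transfer argument for the order-$p_i$ action with $\FF_\ell$ coefficients) to force $\dim_{\FF_\ell}H^*(Y_i;\FF_\ell)=\chi(Y_{i-1})$ for every prime $\ell$, which is equivalent to torsion freeness since $H^0(Y_i)\neq0$. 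Propagating the residual cohomological triviality cleanly down the tower is the secondary difficulty, and I would handle it through the equivariance of the comparison maps supplied by the localization theorem.
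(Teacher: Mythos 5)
Your reduction to $A_0=\Ker\rho$ is where the argument breaks: the statement you reduce to, namely that $\chi(Y^{A_0})=\chi(Y)$ for the \emph{full} kernel of the action on cohomology, is false. Take $X=S^2$ and $A=\{\Id,\diag(-1,-1,1),\diag(1,-1,-1),\diag(-1,1,-1)\}\cong(\ZZ/2)^2\leq\SO(3)$. Every element has degree $1$, so $\rho$ is trivial and your $A_0$ is all of $A$; but $X^{A}=\emptyset$, so $\chi(X^{A_0})=0\neq 2=\chi(X)$. The same example pinpoints the step of your tower that fails: for any choice of $H_1\cong\ZZ/2$ one gets $Y_1=X^{H_1}=\{N,S\}$, two antipodal points that the residual generator $g_2$ swaps, so $g_2$ acts nontrivially on $H^0(Y_1;\QQ)$ and Lefschetz gives $\chi(Y_2)=0$, not $\chi(Y_1)$. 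This shows that triviality of the residual action on $H^*(Y_{i-1};\QQ)$ is \emph{not} inherited from triviality on $H^*(X)$, and no localization or equivariant-formality argument can transport it (in the example the residual action on the localized module is the swap of the two summands of $\FF_2[t,t^{-1}]^{\oplus 2}$, which is perfectly compatible with triviality upstairs). The other half of your inheritance claim is also false in general: ``no odd cohomology'' does not pass to fixed-point sets, since Smith theory controls only the acting prime and $Y^{\ZZ/p}$ can acquire $\ell$-torsion for $\ell\neq p$ (e.g.\ $\FF_p$-acyclic fixed-point sets of $\ZZ/p$-actions on disks with nontrivial $H_1$ of order prime to $p$). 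So the theorem genuinely requires passing to a subgroup strictly smaller than $\Ker\rho$, and the entire difficulty is to bound its index independently of $|A|$.

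This is exactly what the paper's proof supplies and your proposal omits. After the same Minkowski reduction, the paper does not descend through a composition series of fixed-point sets at all; instead it produces, via the Mann--Su bound and the notion of $\lambda$-stability (Theorem \ref{thm:existence-stable-subgroups}, resting on the good-triangulation counting argument of Lemma \ref{lemma:exists-Gamma-chi-p} and the chain bound of Lemma \ref{lemma:chains-inclusions}), a further subgroup $\Gamma\leq\Ker\rho$ of bounded index together with a \emph{single element} $\gamma\in\Gamma$ with $X^{\gamma}=X^{\Gamma}$ (Lemmas \ref{lemma:existeix-element-generic-p} and \ref{lemma:no-odd-cohomology}). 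Lefschetz is then applied only once, to $\gamma$ acting on $X$ itself, where the cohomology and the (trivial) action on it are known; the cohomology of intermediate fixed-point sets is never needed. In the $\SO(3)$ example this machinery correctly forces the passage to an index-$2$ subgroup. You would need to import something equivalent to this index-bounded refinement before any version of your tower argument could start.
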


To prove Theorem \ref{thm:main-abelian} we introduce a condition on finite abelian smooth
group actions on
the manifold $X$ called $\lambda$-stability, which depends on the choice of an integer $\lambda$.
We prove that
if $\lambda$ is big enough (depending on $X$) then for any $\lambda$-stable smooth action of a finite
abelian group $\Gamma$ on $X$ there exists some $\gamma\in\Gamma$ satisfying $X^{\Gamma}=X^{\gamma}$;
then the Euler characteristic of $X^{\gamma}$ can be computed using Lefschetz' formula
\cite[Exercise 6.17.3]{D}. Furthermore, for any $\lambda$ there exists a constant $C_{\lambda}$
(depending on $\lambda$ and $X$) such that for any abelian group $\Gamma$ acting smoothly on $X$
has a subgroup $\Gamma_0$ whose action on $X$ is $\lambda$-stable and
$[\Gamma:\Gamma_0]\leq C_{\lambda}$.

\subsection{The constants in Theorem \ref{thm:main-abelian}}

A natural question which we do not address here is to find the optimal values
of $C$ in Theorem \ref{thm:punts-quasi-fixos}. We also do not estimate the constants
that arise from our arguments; doing so would require in particular estimating the
constants in \cite{MT}, which plays a crucial role in \cite{M4}.
In the case of Theorem \ref{thm:main-abelian}, however,
one can give concrete (albeit
probably far from sharp) bounds. Instead of giving a general
bound, we state two theorems giving bounds on actions on disks
and even dimensional spheres. The proofs use the same ideas as
the proof of Theorem \ref{thm:main-abelian}, but restricting to
actions on disks and spheres allows us to get stronger bounds
than in the general case.

Define a map $f:\ZZ\to\NN$ as follows. For any nonnegative integer $k$ let
$$f(k)=2^k\prod_{p\geq 3}p^{[k/p]},$$
where the product is over the set of odd primes, and set
$f(k)=1$ for every negative integer $k$. Note that if $k$ is
nonnegative then $f(k)$ divides $2^{k-[k/2]}k!$.

\begin{theorem}
\label{thm:fixed-pt-disks} Let $n$ be a natural number and let
$X$ be the $n$-dimensional disk. Let $k=[(n-3)/2]$.
\begin{enumerate}
\item Any finite abelian group $A$ acting smoothly on $X$
    has a subgroup $A'\leq A$ such
     that $[A:A']$ divides
    $f(k)$, and $\chi(X^{A'})=1$.
\item For any finite abelian group $A$ acting smoothly on
    $X$ such that all prime divisors $p$ of $|A|$
    satisfy $p>\max\{2,k\}$ we have $\chi(X^{A})=1$.
\end{enumerate}
\end{theorem}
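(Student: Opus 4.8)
The plan is to prove this by induction on the dimension, exploiting the disk's special structure. The $n$-disk $D^n$ is contractible, so $\chi(X)=1$ and we want to produce a subgroup of controlled index whose fixed set also has Euler characteristic $1$ (in particular is nonempty). The key geometric fact is that any smooth action of a finite group on $D^n$ has a fixed point on the boundary structure or, more usefully, can be linearized near a fixed point via Lemma \ref{lemma:linearization}. Let me think about how the two parts fit together and what I would prove first.

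Let me start with part (2), since part (1) should follow by reducing to it. I would argue by induction on $n$. For small $n$ (say $n\leq 3$, where $k=[(n-3)/2]\leq 0$) the hypothesis $p>\max\{2,k\}$ is just $p>2$, i.e. $A$ has odd order, and one knows classically that odd-order (indeed arbitrary) finite group actions on low-dimensional disks have connected nonempty fixed sets, giving $\chi(X^A)=1$. For the inductive step, I would average a Riemannian metric to make $A$ act by isometries, pick a fixed point $x_0$ (which exists because $D^n$ is a compact contractible manifold with $\chi=1$, so any sufficiently constrained action fixes a point — here the prime condition on $|A|$ is precisely what forces the fixed set to be nonempty and to behave like a disk), and then linearize: the isotropy representation of $A$ on $T_{x_0}X\cong\RR^n$ decomposes into irreducibles. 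The fixed set $X^A$ near $x_0$ is a disk whose dimension equals the dimension of the trivial summand. The point of the hypothesis $p>\max\{2,k\}$ is to guarantee, via a counting/dimension argument on the nontrivial irreducible real representations (which for a cyclic $p$-group are two-dimensional), that these nontrivial summands cannot consume too much of the tangent space without forcing a prime $p\leq k$ to appear. Thus $X^A$ is itself a disk (contractible neat submanifold), whence $\chi(X^A)=1$.

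For part (1), I would not assume the prime condition but instead extract a subgroup $A'$ achieving it, at the cost of a bounded index. Given arbitrary abelian $A$, I would decompose $A=\prod_p A_p$ into its $p$-primary parts. For primes $p>\max\{2,k\}$ I keep all of $A_p$; for the finitely many small primes $p\leq\max\{2,k\}$ I must pass to a subgroup of $A_p$ whose action has the right fixed-point behaviour. The index I pay at each such prime is controlled by how the $p$-group acts on the $\leq n$-dimensional tangent representation at a fixed point, and a careful bookkeeping of the possible nontrivial characters modulo the bound $k=[(n-3)/2]$ yields precisely the factor $p^{[k/p]}$ for odd $p$ and $2^k$ for $p=2$ — this is exactly the definition of $f(k)$. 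Assembling these over all primes gives $[A:A']\mid f(k)$, and by construction $A'$ satisfies the hypothesis of part (2) (or a slightly strengthened fixed-point condition), so $\chi(X^{A'})=1$.

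The main obstacle I expect is the representation-theoretic bookkeeping that produces the exact divisibility $[A:A']\mid f(k)$ rather than merely a bound depending on $n$. Getting the sharp exponent $[k/p]$ requires understanding precisely how many independent nontrivial weights a faithful action forces into the $n$-dimensional isotropy representation while still allowing the fixed disk to survive, and organizing the descent through the $p$-primary parts so that the indices multiply correctly. The dimension count $k=[(n-3)/2]$ (rather than, say, $[n/2]$) reflects that on a disk one retains at least a few dimensions of fixed directions, and making this precise — likely through the linearization lemma together with an induction that peels off one nontrivial isotypic summand at a time — is where the real work lies; the topological input (nonemptiness and $\chi=1$ of the fixed disk) is comparatively standard once linearization is in hand.
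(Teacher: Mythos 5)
There is a genuine gap, in fact two. First, your logical architecture is circular: you propose to deduce part (1) from part (2) by shrinking each small-prime part $A_p$ to a subgroup $A_p'$ of index dividing $p^{[k/p]}$ and asserting that $A'=\prod_p A_p'$ "satisfies the hypothesis of part (2)". It does not: unless $A_p'$ is trivial (which would cost index $|A_p|$, not a divisor of $f(k)$), the order of $A'$ is still divisible by the small primes, so the hypothesis $p>\max\{2,k\}$ for all $p\mid |A'|$ fails. The paper runs the implication in the opposite direction: it proves (1) directly and obtains (2) as an immediate corollary, because when every prime divisor of $|A|$ exceeds $\max\{2,k\}$ one has $\gcd(|A|,f(k))=1$, forcing $A'=A$. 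Your hedge "(or a slightly strengthened fixed-point condition)" is exactly where the content lies, and it is not supplied. Relatedly, your heuristic that the prime condition "is precisely what forces the fixed set to be nonempty" is off: Smith theory gives that $X^{A_p}$ is $\FF_p$-acyclic (hence nonempty and connected) for \emph{every} prime; the role of the condition $p>k$ is to make the index-reduction at $p$ trivial.

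Second, the mechanism producing both the exponent $p^{[k/p]}$ and the conclusion $\chi(X^{A'})=1$ is absent. You cannot argue that $X^{A'}$ (or $X^A$) "is itself a disk": Smith theory controls each $X^{A_p}$ over $\FF_p$ only, and the intersection $\bigcap_p X^{A_p}$ over several primes is not controlled by any acyclicity statement — a priori it could even be empty. The paper's key step (Lemma \ref{lemma:primer-p-disk}) is an averaging argument: letting $\theta_1,\dots,\theta_r$ be the distinct nontrivial real irreducibles in $T_xX/T_xX^{A_p}$ with kernels $A_j$ and $e_j=\log_p[A_p:A_j]$, the average over $\gamma\in A_p$ of $I(\gamma)=\sum_{j:\gamma\in A_j}e_j$ is at most $r/p$, so some $\gamma_p$ has $I(\gamma_p)\leq[r/p]$; setting $A_p'=\bigcap_{j:\gamma_p\in A_j}A_j$ one gets $X^{\gamma_p}=X^{A_p'}$ (using $\FF_p$-acyclicity of both sides plus a tangent-space computation) and $[A_p:A_p']\mid p^{[r/p]}$, with $r\leq (n-l)/2\leq k$ for odd $p$ and $[r/2]\leq k$ for $p=2$ once $\dim X^{A_p}\geq 3$ (the case $\dim X^{A_p}\leq 2$ is handled by restricting the whole $A$-action to the low-dimensional fixed disk). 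Finally one sets $\gamma=\prod_p\gamma_p$, shows $X^\gamma=X^{A'}$ by the Chinese remainder trick, and applies the Lefschetz fixed point formula to the single diffeomorphism $\gamma$ of the acyclic disk to get $\chi(X^{A'})=\chi(X^\gamma)=1$. Without the averaging step and the Lefschetz step, neither the sharp divisor $f(k)$ nor the Euler characteristic computation follows from your outline.
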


\begin{theorem}
\label{thm:fixed-pt-spheres} Let $m$ be a natural number and
let $X$ be a smooth $2m$-dimensional homology sphere.
\begin{enumerate}
\item Any finite abelian group $A$ acting smoothly on $X$
    has a subgroup $A'\leq A$ such that $[A:A']$ divides $2^{m+1}f(m-1)$,
    and $|X^{A'}|\geq 2$.
\item For any finite abelian group $A$ acting smoothly on
    $X$ such that all prime divisors $p$ of $|A|$
    satisfy $p>\max\{2,m-1\}$ we have
$|X^{A}|\geq 2$.
\end{enumerate}
\end{theorem}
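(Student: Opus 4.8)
The plan is to run the same reduction-to-a-single-element argument that underlies Theorem \ref{thm:main-abelian}, but to exploit the very restricted cohomology of a homology sphere to keep the constants small and to pin down the precise value $2^{m+1}f(m-1)$. Throughout, write $X$ for the smooth homology $2m$-sphere, so that $H^*(X;\QQ)\cong H^*(S^{2m};\QQ)$ and in particular $\chi(X)=2$; note that $X$ has no odd cohomology, so the machinery of the previous sections applies. For any smooth finite-order diffeomorphism $\gamma$ of $X$, Lefschetz' formula \cite[Exercise 6.17.3]{D} gives $\chi(X^\gamma)=L(\gamma)=1+\ve(\gamma)$, where $\ve(\gamma)\in\{\pm1\}$ records the action of $\gamma$ on $H^{2m}(X;\QQ)$. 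Thus as soon as $\gamma$ preserves orientation we get $\chi(X^\gamma)=2$; since a space with $\chi=2$ can be neither empty nor a single point, this already yields $|X^\gamma|\geq 2$. The whole problem is therefore to produce, after passing to a controlled-index subgroup, a single orientation-preserving $\gamma$ with $X^A=X^\gamma$.

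For the first reduction I would pass to the kernel $A^+$ of the orientation character $A\to\{\pm1\}$ (the action on $H^{2m}(X;\ZZ)$); this costs index at most $2$ and makes every element of $A^+$ orientation-preserving. The core step is then to find $\gamma\in A^+$ with $X^{A^+}=X^\gamma$, and as in Theorem \ref{thm:main-abelian} this is exactly what $\lambda$-stability provides: after replacing $A^+$ by a $\lambda$-stable subgroup $A'$ of index at most $C_\lambda$ there is $\gamma\in A'$ with $X^{A'}=X^\gamma$, whence $\chi(X^{A'})=\chi(X^\gamma)=2$ and $|X^{A'}|\geq 2$. To turn this into the explicit bound I would reinterpret $\lambda$-stability concretely: fixing $x\in X^{A'}$ and linearizing via Lemma \ref{lemma:linearization}, the normal representation of $A'$ splits into two-dimensional rotation blocks with nontrivial characters $\theta_1,\dots,\theta_r\colon A'\to\CC^\times$, and $X^\gamma=X^{A'}$ holds precisely when $\gamma$ avoids every maximal proper isotropy subgroup, i.e. $\theta_i(\gamma)\neq1$ for all $i$. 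Here Smith theory enters: since $2m-\dim X^{A'}$ is the rank of the normal bundle and the odd part of $X^{A'}$ is an even-dimensional mod-$p$ homology sphere, one gets $r\leq m-1$ whenever $\dim X^{A'}\geq 2$. Producing $\gamma$ avoiding the corresponding character-kernels is then a covering question for finite abelian $p$-groups: a finite $p$-group cannot be covered by $p$ or fewer proper subgroups, and the index one must pay to guarantee such a $\gamma$—together with the factor needed to straighten the $2$-primary part and the orientation factor $2$—is what assembles into $2^{m+1}f(m-1)=2^{2m}\prod_{p\geq3}p^{[(m-1)/p]}$; the constant is chosen large enough to cover even the worst case of isolated fixed points, where up to $m$ weights may occur.

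Part (2) should then come out of the same analysis with no subgroup passage at all. If every prime divisor $p$ of $|A|$ satisfies $p>\max\{2,m-1\}$, then $|A|$ is odd, so $A^+=A$ and every element is orientation-preserving. In the generic case $\dim X^A\geq 2$ the bound $r\leq m-1<p$ means that at each prime we would have to cover an abelian $p$-group by at most $p-1$ proper subgroups, which is impossible; hence a single $\gamma\in A$ with $X^A=X^\gamma$ exists outright and Lefschetz gives $\chi(X^A)=2$, so $|X^A|\geq 2$. The remaining case $\dim X^A=0$ forces $X^A$ to be a finite point set, and producing the required $\gamma$ there is precisely the delicate global-counting issue discussed below.

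I expect the main obstacle to be twofold. First, the honest derivation of the constant: one must show that a subgroup of index dividing $2^{m+1}f(m-1)$ suffices to make the action $\lambda$-stable for $\lambda$ large enough that the weight characters become simultaneously avoidable, which requires carefully separating the $2$-primary contribution—where orientation, Smith theory over $\FF_2$, and the $\pm1$ eigenspaces all interact—from the odd-primary contribution, where the clean covering bound applies. Second, and more subtly, the count of maximal proper isotropy subgroups must be controlled globally rather than at a single fixed point: when $X^{A'}$ is disconnected (the extreme case being two isolated fixed points) one must check that the normal characters at distinct components do not produce more than the allotted number of distinct kernels. This is exactly where the homology-sphere hypothesis is used most essentially, since $\chi(X^{A'})=2$ severely constrains the fixed-point set and, in the linear model, ties the weights at the two poles to one another.
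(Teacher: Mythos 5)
Your overall strategy --- reduce, after passing to a controlled subgroup, to a single orientation-preserving element $\gamma$ with $X^{A'}=X^{\gamma}$ and then read off $\chi(X^{\gamma})=2$ from Lefschetz --- is the right skeleton, and you correctly flag the two danger points. But both are left unresolved, and each conceals a genuine piece of the argument. The most serious gap is the $2$-primary reduction: passing once to the kernel $A^{+}$ of the orientation character (index $2$) is not enough. Mod-$2$ Smith theory only says that the fixed point set of an involution is an $\FF_2$-homology $s$-sphere with $s$ of unconstrained parity (possibly $s=-1$, i.e.\ empty); orientation-preservation on $X$ controls the first dimension drop but says nothing about how elements of $A^{+}$ act on the orientation of intermediate fixed sets $X^{g}$, $g\in A^{+}$. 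The paper needs an induction (Lemma \ref{lemma:sphere-2}): at each stage pass to the orientation-preserving subgroup of the action on the current fixed sphere and quotient by a central $\ZZ/2$; since the dimension drops by at least $2$ each time, the accumulated index divides $2^{m+1}$. Without this you cannot even guarantee $X^{A_2'}\neq\emptyset$, and your accounting of the $2$-part $2^{2m}$ of $2^{m+1}f(m-1)$ (which splits as $2^{m+1}$ from this induction times $2^{m-1}$ from character avoidance at $p=2$) has no derivation. Relatedly, ``two-dimensional rotation blocks with $r\leq m-1$'' is only correct for odd $p$; at $p=2$ there are one-dimensional $(-1)$-blocks and the correct bound is $r\leq 2m-2l$.

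Conversely, the ``delicate global-counting issue'' you defer is actually the easy part, while the step you treat as routine is where the real device lives. If some $X^{A_p}$ is $0$-dimensional it is an $\FF_p$-homology $0$-sphere, i.e.\ exactly two points; $A$ permutes them, so a subgroup of index dividing $2$ fixes both and one is done --- no counting needed. Otherwise every $X^{A_p'}$ is a mod-$p$ homology sphere of dimension at least $2$, hence connected, so the character data at a single point determines $X^{A_p'}$ globally and no multi-component bookkeeping arises. Finally, your covering lemma (``a $p$-group is not the union of $p$ or fewer proper subgroups'') only produces a $\gamma$ avoiding all $r$ kernels when $r\leq p$, which suffices for part (2) but not for part (1). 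The source of the divisor $p^{[r/p]}$ is the averaging argument of Lemma \ref{lemma:primer-p-disk}: with $e_j=\log_p[A_p:\Ker\theta_j]$ and $I(\gamma)=\sum_{j:\,\gamma\in\Ker\theta_j}e_j$, the average of $I$ over $A_p$ is at most $r/p$, so some $\gamma$ has $I(\gamma)\leq[r/p]$, and one takes $A_p'$ to be the intersection of the kernels containing that $\gamma$. This mechanism is absent from your proposal. (Part (2), incidentally, follows from part (1) in one line: under the hypothesis no prime divisor of $|A|$ divides $2^{m+1}f(m-1)$, so $A'=A$.)
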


\subsection{Notation and conventions}
\label{ss:notation}
We denote inclusion of sets (resp. groups) with the symbol
$\subseteq$ (resp. $\leq$); the symbol $\subset$ (resp. $<$) is reserved for {\it strict} inclusion.
If $p$ is a prime we denote by $\FF_p$ the field of $p$ elements.
When we say that a group $G$ can be generated by $d$ elements we mean that there
are elements $g_1,\dots,g_d\in G$, {\it not necessarily distinct}, which generate $G$.
All manifolds appearing in the text may, unless we say the contrary,
be open and have boundary. If a group $G$ acts on a set $X$ we denote
the stabiliser of $x\in X$ by $G_x$, and for any subset $S\subset G$ we denote
$X^S=\{x\in X\mid S\subseteq G_x\}$.

In this paper manifolds or submanifolds need not be connected and may have
connected components of different dimensions. If $X$ is a manifold and $x\in X$
we denote by $\dim_x X$ the dimension of the connected component of $X$ containing
$x$.

Following \cite[\S 1.4]{H} we say that a submanifold $Y$ of a manifold $X$ with boundary
is neat if $\partial Y=Y\cap \partial X$ and $Y$ intersects transversely the boundary $X$.

\subsection{Remark}
The main ideas in this paper are a generalization to arbitrary dimensions of those in \cite{M2},
where the Jordan property is proved for diffeomorphism groups of compact $4$-manifolds
with $\chi\neq 0$. Note that unlike \cite{M1,M4}, the arguments in \cite{M2} only
use elementary finite group theory and in particular do not rely on the CFSG.

Theorem \ref{thm:main-abelian} appeared first in the (not to be published)
preprint \cite{M1} as an ingredient of the proof
of the Jordan property for diffeomorphism groups of compact manifolds without odd cohomology.
Later it appeared in the third version of \cite{M4}, which proves that the
diffeomorphism groups of compact manifolds with $\chi\neq 0$ are Jordan. The arguments in
\cite{M4} do not use Theorem \ref{thm:main-abelian}, and Theorem \ref{thm:punts-quasi-fixos}
is deduced from combining the Jordan property with Theorem \ref{thm:main-abelian}, as we do
in the present paper. Finally we have decided to split in two parts the third version
of \cite{M4}, the first part containing the results on Jordan property (this appears in the fourth version of \cite{M4}), and the second one is the present paper.

\subsection{Contents of the paper}
Section \ref{s:preliminaries} contains some preliminary results.
In Section \ref{s:lambda-stable-actions} we introduce the notion of $\lambda$-stable action
and its basic properties. These are used in Section \ref{s:proof-thm:main-abelian}
to prove Theorem \ref{thm:main-abelian}.
Finally, in Section \ref{s:proofs-thm:fixed-pt} we prove Theorems \ref{thm:fixed-pt-disks} and \ref{thm:fixed-pt-spheres}.

\section{Preliminaries}

\label{s:preliminaries}

\subsection{Local linearization of smooth finite group actions}
\label{ss:local-linearization}
The following result is well known (see e.g. \cite{M4} for the proof). Statement (1)
implies that the fixed point set of a finite group action on a manifold with boundary
is a neat submanifold. 

\begin{lemma}
\label{lemma:linearization}
Let a finite group $\Gamma$ act smoothly on a manifold $X$, and let $x\in X^{\Gamma}$.
The tangent space $T_xX$ carries a linear action of $\Gamma$, defined as the derivative
at $x$ of the action on $X$, satisfying the following properties.
\begin{enumerate}
\item There exist neighborhoods $U\subset T_xX$ and $V\subset X$, of $0\in T_xX$ and $x\in X$ respectively, such that:
    \begin{enumerate}
    \item if $x\notin\partial X$ then there is a $\Gamma$-equivariant diffeomorphism $\phi\colon U\to V$;
    \item if $x\in\partial X$ then there is $\Gamma$-equivariant diffeomorphism $\phi\colon U\cap \{\xi\geq 0\}\to V$, where $\xi$ is a nonzero
        $\Gamma$-invariant element of $(T_xX)^*$ such that $\Ker\xi=T_x\partial X$.
    \end{enumerate}
\item If the action of $\Gamma$ is effective and $X$ is connected then the action of
$\Gamma$ on $T_xX$ is effective, so it induces an inclusion $\Gamma\hookrightarrow\GL(T_xX)$.
\item \label{item:inclusio-propia-subvarietats-fixes}
If $\Gamma'\vt \Gamma$ and $\dim_xX^{\Gamma}<\dim_xX^{\Gamma'}$ then
there exists an irreducible $\Gamma$-submodule $V\subset T_xX$
on which the action of $\Gamma$ is nontrivial but the action of $\Gamma'$ is trivial.
\end{enumerate}
\end{lemma}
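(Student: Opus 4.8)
The plan is to base everything on the classical Bochner averaging trick, deduce statement (1) from it, and then read off (2) and (3) from the resulting equivariant local model. First I would fix a $\Gamma$-invariant Riemannian metric on $X$, obtained by averaging an arbitrary metric over the finite group $\Gamma$; with respect to such a metric every $\gamma\in\Gamma$ acts by isometries. At a fixed point $x\in X^{\Gamma}$ the differential $d\gamma_x$ preserves $T_xX$, and $\gamma\mapsto d\gamma_x$ is the asserted linear action. For the interior case (1a), when $x\notin\partial X$, the exponential map $\exp_x$ is a diffeomorphism from a neighborhood $U$ of $0\in T_xX$ onto a neighborhood $V$ of $x$; since an isometry fixing $x$ carries geodesics through $x$ to geodesics through $x$, it commutes with $\exp_x$, so $\phi=\exp_x$ intertwines the linear action $d\gamma_x$ with the action of $\gamma$ on $X$ and has differential the identity at $0$.

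For the boundary case (1b) I would first double $X$ along $\partial X$ to obtain a boundaryless manifold $DX$ carrying both the extended $\Gamma$-action (well defined because $\Gamma$ preserves $\partial X$) and the reflection involution $\sigma$ with $\mathrm{Fix}(\sigma)=\partial X$. Averaging a metric over $\Gamma\times\langle\sigma\rangle$ makes $\partial X$ totally geodesic, so the interior construction applied at $x\in DX$ yields a $\Gamma$-equivariant $\phi=\exp_x\colon U\to V$ that maps $U\cap T_x\partial X$ into $\partial X$ and the two sides of $U\setminus T_x\partial X$ into the two components of $V\setminus\partial X$. Since $\partial X$ is $\Gamma$-invariant, the hyperplane $T_x\partial X\subset T_xX$ is $\Gamma$-invariant, and because each $\gamma$ preserves the inward side of $X$ the induced isometric action on the normal line $T_xX/T_x\partial X$ fixes the inward half-line, hence is trivial; dually one obtains the $\Gamma$-invariant functional $\xi$ with $\ker\xi=T_x\partial X$ and $\{\xi\ge 0\}$ the inward half-space, and $\phi$ restricts to the desired equivariant diffeomorphism $U\cap\{\xi\ge 0\}\to V$.

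Part (2) then follows by a connectedness argument: if $d\gamma_x=\mathrm{id}$, then by (1) $\gamma$ acts as the identity on $V$, and the set of $y\in X$ with $\gamma(y)=y$ and $d\gamma_y=\mathrm{id}$ is closed by continuity, open by the local model (1) at each such $y$, and nonempty; connectedness forces $\gamma$ to act trivially, so effectiveness gives $\gamma=e$ and hence $\Gamma\hookrightarrow\GL(T_xX)$. For part (3) I would translate the hypothesis into linear algebra via (1): for any subgroup $H\le\Gamma$ the equivariance of $\phi$ identifies a neighborhood of $x$ in $X^H$ with a neighborhood of $0$ in $(T_xX)^H$ (intersected with the half-space when $x\in\partial X$, which does not change dimension), so $\dim_x X^H=\dim (T_xX)^H$. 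Thus $(T_xX)^{\Gamma}\subsetneq (T_xX)^{\Gamma'}=:W$. Since $\Gamma'\vt\Gamma$, the subspace $W$ is a $\Gamma$-submodule on which $\Gamma'$ acts trivially, and $W^{\Gamma}=(T_xX)^{\Gamma}\subsetneq W$ shows $\Gamma$ acts nontrivially on $W$; decomposing $W$ into irreducible $\Gamma$-submodules, at least one summand $V$ must be nontrivial (otherwise $W=W^{\Gamma}$), and this $V$ has $\Gamma'$ acting trivially and $\Gamma$ acting nontrivially, as required.

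The main obstacle is the boundary case of (1): making the local model genuinely equivariant near boundary fixed points, in particular arranging $\partial X$ to be totally geodesic so that $\exp_x$ respects the half-space decomposition, and checking that the normal functional $\xi$ can be chosen $\Gamma$-invariant. Everything else is either the standard Bochner trick or a formal consequence of the equivariant normal form.
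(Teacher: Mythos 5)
Your argument is the standard Bochner linearization proof, and it is correct in all three parts; the paper itself does not prove this lemma but cites it as well known (deferring to \cite{M4}), where essentially the same averaging-plus-exponential-map argument is used. The only point worth making explicit in the boundary case is that the smooth structure and the smooth extension of the $\Gamma$-action on the double $DX$ require a $\Gamma$-equivariant collar of $\partial X$ (obtained, e.g., by averaging an inward-pointing vector field); equivalently one can skip the doubling and simply choose the invariant metric to be a product near the boundary, after which your identification of $\xi$ and the half-space model goes through as written.
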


\subsection{Fixed point loci of actions of abelian $p$-groups}
\label{s:fixed-point-loci-p-groups}

The following lemma is standard, see e.g. \cite[Lemma 5.1]{M7} for the proof.

\begin{lemma}
\label{lemma:cohom-no-augmenta}
Let $X$ be a manifold, let $p$ be a prime, and
let $G$ be a finite $p$-group acting continuously on $X$.
We have
$$\sum_j b_j(X^G;\FF_p)\leq \sum_j b_j(X;\FF_p).$$
In particular, the number of connected components of $X^G$
is at most $\sum_j b_j(X;\FF_p)$.
\end{lemma}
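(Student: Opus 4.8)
The plan is to deduce everything from the classical Smith inequality for a single cyclic group $\ZZ/p$, together with a reduction of an arbitrary finite $p$-group to this case. Throughout it is convenient to work not with manifolds but with the larger class of finitistic spaces (paracompact spaces every open cover of which has a finite-dimensional refinement), since a manifold is finitistic but the fixed loci $X^G$ of a merely continuous action need not be manifolds. A closed subspace of a finitistic space is again finitistic, so this class is stable under the operations we need.

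First I would reduce to $G=\ZZ/p$ by induction on $|G|$. If $G$ is nontrivial its center is a nontrivial abelian $p$-group, hence contains a central subgroup $H\cong\ZZ/p$. The locus $X^H$ is $G$-invariant (because $H\vt G$), the $G$-action on $X^H$ factors through $G/H$, and one checks directly that $(X^H)^{G/H}=X^G$. Applying the $\ZZ/p$ case to the $H$-action on $X$ gives $\sum_j b_j(X^H;\FF_p)\leq\sum_j b_j(X;\FF_p)$, while applying the induction hypothesis to the $G/H$-action on the finitistic space $X^H$ gives $\sum_j b_j(X^G;\FF_p)\leq\sum_j b_j(X^H;\FF_p)$; chaining these proves the claim.

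For the case $G=\ZZ/p$ the tool is Smith theory. Writing $\sigma$ for a generator and $\tau=\sigma-1$, $\nu=\tau^{p-1}$ for the norm element of $\FF_p[G]\cong\FF_p[t]/(t^p)$, one regards the (\v Cech or singular) cochain complex $C^*=C^*(X;\FF_p)$ as a complex of $\FF_p[G]$-modules and forms the Smith complexes built from $\tau^r C^*$ and $C^*(X^G)$. These sit in the two Smith long exact sequences relating $H^*(X;\FF_p)$, $H^*(X^G;\FF_p)$ and the cohomology of the Smith complexes; an additivity (Euler-characteristic) bookkeeping through these sequences shows that passing to fixed points cannot increase the total $\FF_p$-Betti number, i.e. $\sum_j b_j(X^G;\FF_p)\leq\sum_j b_j(X;\FF_p)$. (Equivalently one may run the Borel construction $X_G=EG\times_G X$ and the localization theorem over $R=H^*(BG;\FF_p)$: the total dimension of $H^*(X^G;\FF_p)$ is the rank of the localized module $S^{-1}H^*_G(X^G)=S^{-1}H^*_G(X)$, which is bounded by the rank of $H^*_G(X)$ over $R$, and the Serre spectral sequence of $X\hookrightarrow X_G\to BG$ bounds this in turn by $\sum_j b_j(X;\FF_p)$.)

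Finally the ``in particular'' clause is immediate: the number of connected components of $X^G$ is at most $b_0(X^G;\FF_p)$, which is at most $\sum_j b_j(X^G;\FF_p)$, and the latter is bounded by $\sum_j b_j(X;\FF_p)$ by what precedes. The main obstacle is the $\ZZ/p$ inequality itself: one must set up Smith theory (or the localization theorem) correctly and verify its hypotheses --- in particular that $X$ and all intermediate fixed loci are finitistic, so that the theory applies and the total Betti numbers behave additively along the exact sequences; the reduction step and the final deduction are then formal.
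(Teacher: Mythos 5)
Your proposal is correct and follows essentially the same route as the paper's reference: the paper cites \cite[Lemma 5.1]{M7}, whose proof is precisely the reduction to a central $\ZZ/p$ by induction on $|G|$ (using $(X^H)^{G/H}=X^G$) combined with the classical Smith inequality for cyclic $p$-groups, exactly as you describe. Your attention to the finitistic-space hypothesis needed to apply Smith theory to the (possibly non-manifold) fixed loci is a sound precaution and does not change the argument.
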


\section{$\lambda$-stable actions of abelian groups}
\label{s:lambda-stable-actions}

In this section all manifolds will be compact, possibly with boundary, and non necessarily
connected. If $X$ is a manifold we call the
dimension of $X$ (denoted by $\dim X$) the maximum of the dimensions of the connected components
of $X$.

\subsection{Preliminaries}

\begin{lemma}
\label{lemma:chains-inclusions}
Suppose that $m,k$ are non negative integers.
If $X$ is a smooth manifold of dimension $m$,
$X_1\subset X_2\subset \dots\subset  X_r\subseteq  X$
are strict inclusions of closed neat 
submanifolds, and each $X_i$ has
at most $k$ connected components, then $$r\leq \left(m+k+1\atop m+1\right).$$
\end{lemma}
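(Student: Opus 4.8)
The plan is to attach to every closed neat submanifold $Y\subseteq X$ a combinatorial invariant that strictly increases along the chain, and then count the possible values of that invariant. Since the connected components of such a $Y$ have dimensions in $\{0,1,\dots,m\}$, let $n_d(Y)$ denote the number of components of $Y$ of dimension $d$, and record the vector $D(Y)=(n_0(Y),\dots,n_m(Y))$. Because $Y$ has at most $k$ components we have $\sum_d n_d(Y)\le k$, so $D(Y)$ ranges over the set of lattice points $\{(n_0,\dots,n_m)\in\ZZ_{\ge 0}^{m+1}\mid \sum_d n_d\le k\}$. A stars-and-bars count (introduce a slack variable and solve $n_0+\dots+n_m+s=k$) shows this set has exactly $\binom{m+k+1}{m+1}$ elements, which is precisely the desired bound; so it suffices to prove that $i\mapsto D(X_i)$ is injective.

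The heart of the argument is a \emph{monotonicity lemma}: if $Y\subset Y'$ is a strict inclusion of closed neat submanifolds, then $D(Y)$ is strictly smaller than $D(Y')$ in the lexicographic order that compares coordinates from the top dimension $m$ downward. Granting this, the vectors $D(X_1),\dots,D(X_r)$ form a strictly increasing sequence in a totally ordered set of cardinality $\binom{m+k+1}{m+1}$, hence are pairwise distinct, and therefore $r\le\binom{m+k+1}{m+1}$.

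To prove the monotonicity lemma I would analyze how the components of $Y'$ meet $Y$. Each component $E$ of $Y$ lies in a unique component $C$ of $Y'$, and $Y\cap C$ is a closed neat submanifold of $C$. The key local observation is that if $Y\cap C$ had a component of dimension $\dim C$, that component would be a codimension-zero neat submanifold of the connected manifold $C$; neatness forces it to be both open and closed in $C$, hence equal to $C$, so $C\subseteq Y$. Consequently each component $C$ of $Y'$ is of one of two types: either $C\subseteq Y$, in which case $C$ is itself a component of $Y$ of the same dimension; or $C\not\subseteq Y$, in which case every component of $Y$ contained in $C$ has dimension $<\dim C$ (possibly none at all). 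Since $Y\subset Y'$ is strict, at least one component of the second type exists.

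Finally I would carry out the bookkeeping. Let $d_0$ be the largest dimension of a component of $Y'$ of the second type. For $d>d_0$ every component of $Y'$ of dimension $d$ is of the first type and contributes exactly one dimension-$d$ component to $Y$, and conversely every dimension-$d$ component of $Y$ arises this way; thus $n_d(Y)=n_d(Y')$ for all $d>d_0$. At dimension $d_0$ the first-type components again match up, but the (at least one) second-type components of dimension $d_0$ each contribute a dimension-$d_0$ component to $Y'$ while contributing only strictly lower-dimensional pieces to $Y$; hence $n_{d_0}(Y)<n_{d_0}(Y')$. Since $d_0$ is the highest coordinate at which $D(Y)$ and $D(Y')$ differ, this is exactly $D(Y)<D(Y')$ in the chosen order. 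The main obstacle is this last accounting step — verifying that second-type components of dimension $d_0$ cannot introduce dimension-$d_0$ components into $Y$ and that no higher-dimensional discrepancy arises — which is precisely where the closedness and neatness hypotheses, through the codimension-zero clopen argument, are used.
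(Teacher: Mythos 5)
Your proposal is correct and follows essentially the same route as the paper: both attach to each $X_i$ the vector counting connected components by dimension, bound the number of such vectors by $\binom{m+k+1}{m+1}$ via stars and bars, and use the clopen argument for full-dimensional closed neat submanifolds to control how components of $X_i$ sit inside components of $X_{i+1}$. The only difference is organizational: you upgrade the paper's injectivity claim (proved there by a two-case contradiction centered on the largest dimension at which the inclusion of components is strict) to strict monotonicity in the top-down lexicographic order, which is a clean and valid refinement of the same idea.
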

\begin{proof}
Let $X_1\subset  X_2\subset \dots\subset  X_r\subseteq  X$ be as in the statement of the lemma.
For any $i$ let $d(X_i)=(d_0(X_i),\dots,d_m(X_i))$, where
$d_{j}(X_i)$ denotes the number of connected components of $X_i$ of dimension $j$.
Each $X_i$ has at most $k$ connected components, so $d(X_i)$ belongs to
$$\dD=\{(d_0,\dots,d_m)\in\ZZ_{\geq 0}^{m+1}\mid \sum d_j\leq k\}.$$
Let us prove that the map $d:\{1,\dots,r\}\to\dD$ is injective.
Suppose that $1\leq i<j\leq r$ and let $Y=X_i$ and $Z=X_j$.
Let $Y_1,\dots,Y_r$ resp. $Z_1,\dots,Z_s$ be the connected components of $Y$ resp. $Z$,
labelled in such a way that
$\dim Y_u\leq \dim Y_{u+1}$ and $\dim Z_v\leq \dim Z_{v+1}$ for every $u,v$.
Since $Y\subset Z$, there exists a map $f\colon \{1,\dots,r\}\to\{1,\dots,s\}$
such that $Y_j\subseteq Z_{f(j)}$, which implies that $\dim Y_j\leq\dim Z_{f(j)}$.
Let $J$ be the set of indices $j$ such that $\dim Y_j<\dim Z_{f(j)}$.
We distinguish two cases.

If $J=\emptyset$, so that
$\dim Y_j=\dim Z_{f(j)}$ for each $j$, then $Y\neq Z$ implies that
$Z=Y\sqcup W$ for some nonempty and possibly disconnected $W\subset X$,
because by assumption $Y$ is a closed neat submanifold of $X$.
This implies that for every $\delta$ we have
$d_{\delta}(Z)\geq d_{\delta}(Y)$, and
the inequality is strict for at least one $\delta$. Hence $d(Y)\neq d(Z)$.

Now suppose that $J\neq\emptyset$ and assume that $d(Y)=d(Z)$. We are going to
see that this leads to a contradiction. Let $\xi=\max J$.
Let $n_Y=\dim Y_{\xi}$ and $n_Z=\dim Z_{f(\xi)}$. We have $n_Y<n_Z$ because $\xi\in J$.
By assumption $Y$ has the same number of $n_Z$-dimensional connected components
as $Z$. One of the $n_Z$-dimensional connected components of $Z$ contains
an $n_Y$-dimensional connected component of $Y$. Hence, not every $n_Z$-dimensional
connected component of $Y$ is contained in an $n_Z$-dimensional connected component
of $Z$, because $Y$ is closed.
So there must exist some $Y_\alpha$ satisfying $\dim Y_\alpha=n_Z$ and $Y_\alpha\subset Z_\beta$
with $\dim Y_\alpha<\dim Z_\beta$, which implies $\alpha\in J$. But $n_Y=\dim Y_{\xi}<n_Z=\dim Y_{\alpha}$,
so $\xi<\alpha$. This contradicts the definition of $\xi$, so the proof that $d$ is injective is now complete.

The injectivity of $d$ implies that $r\leq |\dD|$. We have
$$|\dD|=\left( m \atop m\right)+\left( m+1 \atop m\right)+\dots+\left( m+k \atop m\right)
=\left( m+k+1 \atop m+1\right)$$
because $\left(m+s\atop m\right)$ is the number of $(m+1)$-tuples of nonnegative integers with total
sum $s$,
so the proof of the lemma is complete.
\end{proof}

A theorem of Mann and Su \cite[Theorem 2.5]{MS} states that for any
compact manifold $X$ there exists some integer
$$\mu(X)\in\ZZ$$
with the property that for any prime $p$ and any elementary $p$-group $(\ZZ/p)^r$ admitting
an effective action on $X$ we have $r\leq\mu(X)$.
This implies that any finite abelian $p$-group acting effectively on $X$ is
isomorphic to $\ZZ/{p^{e_1}}\oplus\dots\oplus\ZZ/{p^{e_r}}$, where $r\leq\mu(X)$ and
$e_1,\dots,e_r$ are natural numbers.

\begin{lemma}
\label{lemma:subgrups-index-petits}
Let $X$ be a compact manifold and let $n$ be a natural number. Any finite abelian
$p$-group $\Gamma$ acting effectively on $X$ has a subgroup $G\leq\Gamma$
of index $[\Gamma:G]\leq p^{n\mu(X)}$ which is contained in each subgroup $\Gamma'\leq\Gamma$
of index $[\Gamma:\Gamma']\leq p^n$.
\end{lemma}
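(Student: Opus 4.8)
The plan is to let $G$ be the intersection of \emph{all} subgroups of $\Gamma$ of index at most $p^n$, namely
$$G=\bigcap_{[\Gamma:\Gamma']\leq p^n}\Gamma',$$
where the intersection ranges over all subgroups $\Gamma'\leq\Gamma$ with $[\Gamma:\Gamma']\leq p^n$. With this choice the second requirement of the lemma is automatic, since $G$ is contained in each such $\Gamma'$ by construction. Thus the whole content of the statement reduces to the single index estimate $[\Gamma:G]\leq p^{n\mu(X)}$.

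To bound this index I would introduce the subgroup $\Gamma^{p^n}=\{\gamma^{p^n}\mid\gamma\in\Gamma\}$, which is a genuine subgroup because $\Gamma$ is abelian. The key observation is that $\Gamma^{p^n}\leq G$. Indeed, if $\Gamma'\leq\Gamma$ has index $[\Gamma:\Gamma']\leq p^n$, then $\Gamma/\Gamma'$ is a finite abelian $p$-group of order at most $p^n$; its exponent divides its order and hence divides $p^n$, so $p^n$ annihilates $\Gamma/\Gamma'$. This says precisely that $\gamma^{p^n}\in\Gamma'$ for every $\gamma\in\Gamma$, i.e. $\Gamma^{p^n}\leq\Gamma'$. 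Intersecting over all such $\Gamma'$ gives $\Gamma^{p^n}\leq G$, and therefore $[\Gamma:G]\leq[\Gamma:\Gamma^{p^n}]$.

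It remains to compute $[\Gamma:\Gamma^{p^n}]$, and this is the only place where the hypotheses on $X$ are used. Since $\Gamma$ acts effectively on the compact manifold $X$, the Mann--Su bound recalled just before the lemma provides a decomposition $\Gamma\cong\ZZ/p^{e_1}\oplus\dots\oplus\ZZ/p^{e_r}$ with $r\leq\mu(X)$. Taking $p^n$-th powers factor by factor gives $\Gamma^{p^n}\cong\bigoplus_{i=1}^r p^n\ZZ/p^{e_i}$, so that
$$[\Gamma:\Gamma^{p^n}]=\prod_{i=1}^r p^{\min(e_i,n)}\leq p^{rn}\leq p^{n\mu(X)}.$$
Combining this with the previous paragraph yields $[\Gamma:G]\leq p^{n\mu(X)}$, which completes the argument.

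I do not expect a real obstacle here: the proof is elementary abelian $p$-group theory, and the manifold enters only through the bound $r\leq\mu(X)$ on the number of cyclic summands, which is exactly what controls the exponent $n\mu(X)$. The one step deserving care is the implication ``order at most $p^n$'' $\Rightarrow$ ``annihilated by $p^n$'', which rests on the exponent of a finite abelian $p$-group dividing its order; everything else is bookkeeping with the invariant factor decomposition.
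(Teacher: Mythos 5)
Your proof is correct and follows essentially the same route as the paper: the paper takes $G$ to be the subgroup generated by the $p^n$-th powers of the Mann--Su generators (i.e.\ $\Gamma^{p^n}$), and the containment in every $\Gamma'$ of index at most $p^n$ rests on exactly your observation that the exponent of $\Gamma/\Gamma'$ divides $p^n$. Your only cosmetic deviation is defining $G$ as the intersection of all small-index subgroups and then bounding its index via $\Gamma^{p^n}\leq G$, which yields the same estimate $[\Gamma:G]\leq p^{n\mu(X)}$.
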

\begin{proof}
We may assume that $\Gamma\simeq \prod_{j=1}^r\la\gamma_j\ra$, where $r\leq\mu(X)$ and
$\gamma_1,\dots,\gamma_r\in\Gamma$. We claim that
$G:=\la \gamma_1^{p^n},\dots,\gamma_r^{p^n}\ra$ has the desired property. Indeed, if
$\Gamma'\leq\Gamma$ is a subgroup satisfying $[\Gamma:\Gamma']\leq p^n$ then
the exponent of $\Gamma/\Gamma'$ divides $p^n$, and this implies that
$\gamma_j^{p^n}\in\Gamma'$ for each $j$, so $G\leq\Gamma'$. Clearly, $[\Gamma:G]\leq p^{nr}\leq p^{n\mu(X)}$.
\end{proof}

\newcommand{\tr}{\operatorname{tr}}

Suppose that a finite group $G$ acts on a compact manifold $Y$, possibly with boundary.
A $G$-good triangulation of $Y$ (see \cite[\S 2.2]{M4}) is a
pair $(\cC,\phi)$, where $\cC$ is a finite simplicial complex endowed with a
action of $G$ and $\phi\colon Y\to |\cC|$ is a $G$-equivariant homeomorphism,
and for any $g\in G$ and any $\sigma\in \cC$ such that $g(\sigma)=\sigma$
we have $g(\sigma')=\sigma'$ for any subsimplex $\sigma'\subseteq\sigma$.
The latter condition implies that for every subgroup $H\leq G$
the fixed point set $\cC^{H}$ is a simplicial
complex and $|\cC^H|=|\cC|^H$.
If both $Y$ and the action of $G$ are smooth, then there exist $G$-good triangulations
of $Y$, see \cite[\S 2.2]{M4}.

\begin{lemma}
\label{lemma:exists-Gamma-chi-p}
Let $X$ be a compact smooth manifold and let $p$ be any prime
number. There exists a natural number $C_{p,\chi}$ with the
following property. For any finite $p$-group $\Gamma$ acting smoothly on $X$ there
exists a subgroup $\Gamma_{\chi}\leq \Gamma$ of index at most $C_{p,\chi}$ satisfying
\begin{enumerate}
\item $[\Gamma:\Gamma_{\chi}]\leq C_{p,\chi}$;
\item for any subgroup $\Gamma_0\leq \Gamma_{\chi}$ we have
$\chi(X^{\Gamma_0})=\chi(X).$
\end{enumerate}
Furthermore, there exists some $P_{\chi}$ such that if
$p\geq P_{\chi}$ then $C_{p,\chi}$ can be taken to be $1$.
\end{lemma}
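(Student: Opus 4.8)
The plan is to prove the sharper quantitative form $C_{p,\chi}=p^{(s-1)\mu(X)}$, where $\mu(X)$ is the Mann--Su constant and $s$ is the least integer with $p^{s}>2B$ for $B:=\sum_j b_j(X;\FF_p)$; the final clause then comes out for free, since $p>2B$ forces $s=1$ and hence $C_{p,\chi}=1$. First I would reduce to an effective action: the kernel $N$ of $\Gamma\to\Diff(X)$ acts trivially, so $X^{\Gamma_0}=X^{\Gamma_0 N}$ for every $\Gamma_0$, and it is enough to produce $\Gamma_\chi$ for the effective $p$-group $\Gamma/N$ and pull it back (indices are preserved). For an effective action Mann--Su applies and $\mu(X)$ bounds the rank.

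The engine is a stratification combined with a divisibility. For each subgroup $H\le\Gamma$ set $X_{(H)}=\{x:\Gamma_x=H\}$; this is a locally closed submanifold, nonempty for only finitely many $H$, and $X^{\Gamma_0}=\bigsqcup_{H\ge\Gamma_0}X_{(H)}$. Since $X^{\Gamma_0}$ is compact, additivity of the compactly supported Euler characteristic gives, writing $a(H):=\chi_c(X_{(H)})$,
\[
\chi(X^{\Gamma_0})=\sum_{H\ge\Gamma_0}a(H),\qquad \chi(X)-\chi(X^{\Gamma_0})=\sum_{H\not\ge\Gamma_0}a(H).
\]
The crucial point is that $\Gamma/H$ acts freely on $X_{(H)}$: here I use that $\Gamma$ is abelian, so $H$ is normal and every point of $X_{(H)}$ has trivial $\Gamma/H$-stabiliser. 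Hence $X_{(H)}\to X_{(H)}/(\Gamma/H)$ is a finite covering and $[\Gamma:H]$ divides $a(H)$.

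Now I bound and trap. By Lemma~\ref{lemma:cohom-no-augmenta}, $|\chi(X^{K})|\le\sum_j b_j(X^{K};\FF_p)\le B$ for every subgroup $K$, so the defect has absolute value at most $2B<p^{s}$. By Lemma~\ref{lemma:subgrups-index-petits} (with $n=s-1$) there is $\Gamma_\chi\le\Gamma$ with $[\Gamma:\Gamma_\chi]\le p^{(s-1)\mu(X)}$ that is contained in every subgroup of index $\le p^{s-1}$, in particular in every isotropy group of that index. Then for any $\Gamma_0\le\Gamma_\chi$, each $H$ contributing to the defect satisfies $\Gamma_0\not\le H$, hence $\Gamma_\chi\not\le H$, hence $[\Gamma:H]>p^{s-1}$; as $[\Gamma:H]$ is a power of $p$ this gives $p^{s}\mid[\Gamma:H]\mid a(H)$. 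Thus $p^{s}$ divides the defect, which has absolute value $<p^{s}$, so the defect is $0$ and $\chi(X^{\Gamma_0})=\chi(X)$. (The case $|\Gamma|<p^{s}$ is degenerate: the lemma then forces $\Gamma_\chi=\{1\}$, and there is nothing to prove.)

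The main obstacle is precisely the upgrade from the congruence $\chi(X^{\Gamma_0})\equiv\chi(X)\pmod p$ — which is immediate from a $\Gamma$-good triangulation, since the cells not fixed by $\Gamma_0$ fall into $\Gamma_0$-orbits of size divisible by $p$ — to an honest equality for small $p$, where the congruence leaves room for a nonzero multiple of $p$. It is the divisibility $[\Gamma:H]\mid a(H)$ that closes this gap, after trapping all small-index isotropy inside $\Gamma_\chi$ pushes every surviving index up to $p^{s}>2B$. This divisibility is exactly where abelianness is used: for a general finite $p$-group only $N_{\Gamma}(H)/H$ acts freely on $X_{(H)}$, so one obtains only $[N_{\Gamma}(H):H]\mid a(H)$. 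Summing $a$ over a conjugacy class of subgroups restores divisibility by $[\Gamma:H]$ and disposes of all \emph{normal} $\Gamma_0\le\Gamma_\chi$, but treating arbitrary (non-normal) subgroups of $\Gamma_\chi$ in the non-abelian case needs a further argument; this matches the fact that the rest of the paper reduces everything to abelian groups.
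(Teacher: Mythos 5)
Your proposal is correct and arrives at exactly the paper's constant ($C_{p,\chi}=p^{n\mu(X)}$ with $p^{n+1}>2\sum_j b_j(X;\FF_p)$), but it routes the key divisibility step differently. The paper chooses a $\Gamma$-good equivariant triangulation and computes $\chi(X)-\chi(X^{\Gamma_0})$ by counting the simplices not fixed by $\Gamma_0$: each such simplex has stabilizer $\Gamma_\sigma\not\geq\Gamma_\chi$, hence lies in a $\Gamma$-orbit of size $[\Gamma:\Gamma_\sigma]\geq p^{n+1}$, and the alternating sum over orbits gives the congruence directly. You instead decompose by orbit type, $X^{\Gamma_0}=\bigsqcup_{H\geq\Gamma_0}X_{(H)}$, and use additivity of $\chi_c$ together with the free action of $\Gamma/H$ on $X_{(H)}$ to get $[\Gamma:H]\mid\chi_c(X_{(H)})$. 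Three remarks on the comparison. First, your route is not really more elementary: the clean justification that the strata $X_{(H)}$ (locally closed, possibly meeting $\partial X$) are nice enough for $\chi_c$ to be additive and multiplicative under the free quotient is precisely an equivariant triangulation, in which $X_{(H)}$ is the union of the open simplices with stabilizer exactly $H$; so the two arguments are close cousins, yours being a repackaging at the level of strata rather than simplices. Second, your divisibility step genuinely uses that $\Gamma$ is abelian ($H$ must be normal for $\Gamma$ to preserve $X_{(H)}$ and for $\Gamma/H$ to act freely), whereas the paper's simplex-orbit count does not need normality — the orbit of $\sigma$ has size $[\Gamma:\Gamma_\sigma]$ regardless. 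Since the lemma is stated for arbitrary finite $p$-groups, this is a real (if mild) loss of generality, which you honestly flag; in mitigation, the lemma is only ever applied to abelian groups in this paper, and Lemma \ref{lemma:subgrups-index-petits}, which both proofs rely on, is itself stated only for abelian $p$-groups. Third, the final clause is not quite ``for free'': since $B=\sum_j b_j(X;\FF_p)$ depends on $p$, you need the (easy, but worth stating) uniform bound on $B$ over all primes coming from finite generation of $H^*(X;\ZZ)$, which is exactly how the paper defines $P_\chi$.
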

\begin{proof}
Let $n$ be the smallest integer such that $p^{n+1}>2\sum_jb_j(X;\FF_p)$.
We have $$|\chi(X)+ap^{n+1}|>\sum_jb_j(X;\FF_p)\qquad\text{for any nonzero integer $a$.}$$
We are going
to prove that $C_{p,\chi}:=p^{n\mu(X)}$ does the job.
Let $\Gamma$ be a $p$-group acting on $X$.
Let $\Gamma_{\tr}\leq\Gamma$ be the kernel of the morphism $\Gamma\to\Diff(X)$
given by the action. For the purposes of proving the lemma we may replace $\Gamma$ by $\Gamma/\Gamma_{\tr}$ and hence assume that $\Gamma$ acts effectively on $X$.

By Lemma \ref{lemma:subgrups-index-petits},
there exists a subgroup $\Gamma_{\chi}\leq\Gamma$ of index $[\Gamma:\Gamma_{\chi}]\leq p^{n\mu(X)}$ such that
for any subgroup $\Gamma'\leq \Gamma$ of index $[\Gamma:\Gamma']\leq p^n$ we have $\Gamma_{\chi}\leq\Gamma'$.
We now prove that any subgroup $\Gamma_0\leq \Gamma_{\chi}$ satisfies
$\chi(X^{\Gamma_0})=\chi(X)$.
Consider a $\Gamma$-good triangulation $(\cC,\phi)$ of $X$.
We have $|\cC|^{\Gamma_0}=|\cC^{\Gamma_0}|$, so
\begin{equation}
\label{eq:chi-comptar}
\chi(X)-\chi(X^{\Gamma_0})=\chi(\cC)-\chi(\cC^{\Gamma_0})=
\sum_{j\geq 0}(-1)^j\sharp\{\sigma\in \cC\setminus \cC^{\Gamma_0}\mid \dim\sigma=j\}.
\end{equation}
If $\sigma\in \cC\setminus \cC^{\Gamma_0}$ then the stabilizer $\Gamma_{\sigma}=\{\gamma\in\Gamma\mid\gamma\cdot\sigma=\sigma\}$ does not contain $\Gamma_0$.
This implies that $[\Gamma:\Gamma_{\sigma}]\geq p^{n+1}$, for otherwise
$\Gamma_{\sigma}$ would contain $\Gamma_{\chi}$ and hence also $\Gamma_0$.
Consequently, the cardinal of the orbit $\Gamma\cdot\sigma$
is divisible by $p^{n+1}$. Repeating this argument for all
$\sigma\in \cC\setminus \cC^{\Gamma_0}$ and using (\ref{eq:chi-comptar}), we conclude that $\chi(X)-\chi(X^{\Gamma_0})$ is divisible by $p^{n+1}$.

Now, we have
$|\chi(X^{\Gamma_0})|\leq \sum_j b_j(X^{\Gamma_0};\FF_p)
\leq \sum_j b_j(X;\FF_p)$
(the first inequality is obvious, and the second one follows
from Lemma \ref{lemma:cohom-no-augmenta}).
By our choice of $n$, the congruence $\chi(X^{\Gamma_0})\equiv\chi(X)\mod p^{n+1}$
and the inequality
$|\chi(X^{\Gamma_0})|\leq \sum_j b_j(X;\FF_p)$
imply that $\chi(X^{\Gamma_0})=\chi(X)$.

We now prove the last statement. Since $X$ is compact, its cohomology is finitely generated,
so in particular the torsion of its integral cohomology is bounded. Hence there exists some $p_0$
such that if $p\geq p_0$ then $b_j(X;\FF_p)=b_j(X)$ for every $j$. Define
$P_{\chi}=\max\{p_0,2\sum_j b_j(X)+1\}$. If $p\geq P_{\chi}$ then the number $n$ defined
at the beginning of the proof is equal to $0$, so $C_{p,\chi}$ can be taken to be $1$.
\end{proof}

\subsection{$\lambda$-stable actions: abelian $p$-groups}

Let $p$ be a prime and let $\Gamma$ be a finite abelian $p$-group acting smoothly
on a smooth compact manifold $X$.
Recall that for any $x\in X^{\Gamma}$ the space
$T_xX/T_x^{\Gamma}X$ (which is the fiber over $x$ of the normal
bundle of the inclusion of $X^{\Gamma}$ in $X$) carries a linear action of $\Gamma$,
induced by the derivative at $x$ of the action on $X$,
and depending up to isomorphism only on the connected component of $X^{\Gamma}$ to which
$x$ belongs.

Let $\lambda$ be a natural number.
We say that the action
of $\Gamma$ on $X$ is {\bf $\lambda$-stable} if:
\begin{enumerate}
\item $\chi(X^{\Gamma_0})=\chi(X)$
for any subgroup $\Gamma_0\leq \Gamma$;
\item for any $x\in X^{\Gamma}$ and any
character $\theta\colon \Gamma\to\CC^*$ occurring in the $\Gamma$-module
$T_xX/T_xX^{\Gamma}$ we have
$$[\Gamma:\Ker\theta]> \lambda.$$
\end{enumerate}

Note that if $\Gamma$ acts trivially on $X$ (e.g. if $\Gamma$ is trivial)
then the action is $\lambda$-stable for any $\lambda$.

When the manifold $X$ and the action of $\Gamma$ on $X$ are clear from the context,
we will sometimes abusively say that $\Gamma$ is $\lambda$-stable.
For example, if an abelian group $G$ acts
on $X$ we will say that a $p$-subgroup $\Gamma\leq G$ is $\lambda$-stable if
the restriction of the action of $G$ to $\Gamma$ is $\lambda$-stable.

\begin{lemma}
\label{lemma:exists-Gamma-stable-p}
Let $\Gamma$ be a finite abelian $p$-group acting smoothly on $X$
so that for any subgroup $\Gamma'\leq \Gamma$ we have
$\chi(X^{\Gamma'})=\chi(X)$.
If $p>\lambda$ then $\Gamma$ is $\lambda$-stable. If $p\leq \lambda$ then there exists a
$\lambda$-stable subgroup $\Gamma_{\stable}\leq\Gamma$ satisfying
$$[\Gamma:\Gamma_{\stable}]\leq\lambda^e,\qquad e=\left(m+k+1\atop m+1\right),$$
where $m=\dim X$ and $k=\sum_jb_j(X;\FF_p)$.
\end{lemma}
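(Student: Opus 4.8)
The plan is to notice first that condition (1) in the definition of $\lambda$-stability is automatically inherited downward: for any subgroups $\Gamma_0\leq\Gamma'\leq\Gamma$ the hypothesis gives $\chi(X^{\Gamma_0})=\chi(X)$, so whatever subgroup we eventually produce will satisfy (1) for free. Thus the whole content of the lemma is to arrange condition (2), which concerns the characters appearing in the normal representations $T_xX/T_xX^{(\cdot)}$ at fixed points.

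For the first statement, suppose $p>\lambda$. Fix $x\in X^{\Gamma}$ and a character $\theta\colon\Gamma\to\CC^*$ occurring in $T_xX/T_xX^{\Gamma}$. By definition of the fixed subspace $\theta$ is nontrivial, and since $\Gamma$ is an abelian $p$-group its image $\theta(\Gamma)$ is a nontrivial cyclic $p$-group; hence $[\Gamma:\Ker\theta]=|\theta(\Gamma)|\geq p>\lambda$. So condition (2) holds and $\Gamma$ is $\lambda$-stable.

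For the second statement ($p\leq\lambda$) I would build a descending chain $\Gamma=H_0\geq H_1\geq\cdots$ as follows. Given $H_i$, if its action is already $\lambda$-stable, stop. Otherwise there are $x\in X^{H_i}$ and a character $\theta\colon H_i\to\CC^*$ occurring in $T_xX/T_xX^{H_i}$ with $[H_i:\Ker\theta]\leq\lambda$; set $H_{i+1}=\Ker\theta$, so that $[H_i:H_{i+1}]\leq\lambda$. By Lemma \ref{lemma:linearization} the action is linearized near $x$, and the summand of the normal representation $T_xX/T_xX^{H_i}$ on which $H_i$ acts through $\theta$ becomes trivial, and hence fixed, upon restriction to $H_{i+1}=\Ker\theta$; therefore $\dim_xX^{H_{i+1}}>\dim_xX^{H_i}$, giving a \emph{strict} inclusion $X^{H_i}\subsetneq X^{H_{i+1}}$ of submanifolds.

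Finally I would bound the length of this process. Each $H_i$ is a $p$-group, so by Lemma \ref{lemma:cohom-no-augmenta} its fixed locus $X^{H_i}$ has at most $k=\sum_jb_j(X;\FF_p)$ connected components, and by Lemma \ref{lemma:linearization} each $X^{H_i}$ is a closed neat submanifold of $X$. The strict chain $X^{H_0}\subsetneq X^{H_1}\subsetneq\cdots\subsetneq X^{H_s}$ then falls under Lemma \ref{lemma:chains-inclusions}, which forces $s+1\leq e=\left(m+k+1\atop m+1\right)$, i.e.\ at most $e-1$ steps. Consequently the construction terminates at some $\lambda$-stable subgroup $H_s=:\Gamma_{\stable}$ with $[\Gamma:\Gamma_{\stable}]\leq\lambda^s\leq\lambda^e$, as required. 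The one point demanding care, and the main obstacle, is verifying that each step genuinely enlarges the fixed locus as a submanifold, so that Lemma \ref{lemma:chains-inclusions} may be invoked with strict inclusions; this rests on the local linearization of Lemma \ref{lemma:linearization} together with the fact that a character occurring in the normal space is nontrivial by construction and becomes trivial precisely on passing to its kernel.
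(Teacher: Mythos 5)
Your proposal is correct and follows essentially the same route as the paper: the case $p>\lambda$ is handled by noting each relevant character has nontrivial image in a $p$-group, and the case $p\leq\lambda$ by iteratively passing to kernels of small-index characters, using Lemma \ref{lemma:linearization} for strictness and neatness, Lemma \ref{lemma:cohom-no-augmenta} to bound components, and Lemma \ref{lemma:chains-inclusions} to terminate the chain. Your observation that condition (1) is inherited for free, and your explicit justification of the strict inclusions via the dimension jump at $x$, match the paper's argument (which cites part (1) of Lemma \ref{lemma:linearization} more tersely for the same point).
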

\begin{proof}
Suppose that $p$ is a prime number satisfying $p>\lambda$,
and that $\Gamma$ is a finite abelian $p$-group satisfying the properties in the statement of
the lemma. Then $\Gamma$ is $\lambda$-stable, because for any
$x\in X^{\Gamma}$ and any character $\theta:\Gamma\to\CC^*$ occurring in $T_xX/T_xX^{\Gamma}$
the subgroup $\Ker\theta\leq\Gamma$, being a strict subgroup (by (1) in Lemma \ref{lemma:linearization}),
satisfies $[\Gamma:\Ker\theta]\geq p>\lambda$.

Now suppose that $p$ is a prime satisfying $p\leq\lambda$
and that $\Gamma$ is a finite abelian $p$-group satisfying the properties in the statement of
the lemma. Let also $m,k,e$ be as in the statement.
We are going to prove that there exists some $\lambda$-stable subgroup $\Gamma_{\stable}\leq\Gamma$ satisfying $[\Gamma:\Gamma_{\stable}]\leq \lambda^e$.

Let $\Gamma_0=\Gamma$. If $\Gamma_0$ is $\lambda$-stable, we define $\Gamma_{\stable}:=\Gamma_0$
and we are done. If $\Gamma_0$ is not $\lambda$-stable, then
there exists some $x\in X^{\Gamma_0}$ and a
character $\theta\colon \Gamma_0\to\CC^*$ occurring in the $\Gamma_0$-module
$T_xX/T_xX^{\Gamma_0}$ such that
$[\Gamma_0:\Ker\theta]\leq\lambda$. Choose one such $x$ and $\theta$
and define $\Gamma_1:=\Ker\theta$. Then clearly
$[\Gamma_0:\Gamma_1]\leq\lambda$ and, by (1) in Lemma \ref{lemma:linearization}, we have a strict inclusion
$X^{\Gamma_0}\subset X^{\Gamma_1}$.
If $\Gamma_1$ is $\lambda$-stable, then we define
$\Gamma_{\stable}:=\Gamma_1$ and we stop, otherwise
we repeat the same procedure with $\Gamma_0$ replaced by $\Gamma_1$ and define
a subgroup $\Gamma_2\leq\Gamma_1$ satisfying $[\Gamma_1:\Gamma_2]\leq\lambda$
and $X^{\Gamma_1}\subset X^{\Gamma_2}$. And so on.
Each time we repeat this procedure, we
go from one group $\Gamma_i$ to a subgroup $\Gamma_{i+1}$ satisfying
$[\Gamma_i:\Gamma_{i+1}]\leq\lambda$ and $X^{\Gamma_i}\subset X^{\Gamma_{i+1}}$.

Suppose that we have been able to repeat the previous procedure $e$ steps, so that we
have a decreasing sequence of subgroups
$\Gamma=\Gamma_0\supset \Gamma_1\supset\dots\supset\Gamma_{e}$
giving strict inclusions
$$X^{\Gamma_0}\subset X^{\Gamma_1}\subset\dots X^{\Gamma_{e}}\subseteq X.$$
For each $j$ the manifold $X^{\Gamma_j}$ is a neat submanifold
of $X$ (by (1) in Lemma \ref{lemma:linearization}) and
the number of connected components of $X^{\Gamma_j}$ satisfies
(by Lemma \ref{lemma:cohom-no-augmenta})
$$|\pi_0(X^{\Gamma_j})|=b_0(X^{\Gamma_j};\FF_p)\leq
\sum_j b_j(X^{\Gamma_j};\FF_p)\leq \sum_j b_j(X;\FF_p)=k.$$
So our assumption leads to a contradiction with Lemma \ref{lemma:chains-inclusions}.
It follows that the previous procedure must stop before reaching the $e$-th step,
so its outcome is a sequence of subgroups
$\Gamma=\Gamma_0\supset \dots\supset\Gamma_{f}$
satisfying $[\Gamma_i:\Gamma_{i+1}]\leq\lambda$, $f<e$, and $\Gamma_{\stable}:=\Gamma_f$
is $\lambda$-stable. We also have $[\Gamma:\Gamma_{\stable}]\leq\lambda^f\leq\lambda^e$,
so the proof of the lemma is complete.
\end{proof}

\subsection{Fixed point sets and inclusions of groups}
Let $X$ be a compact manifold.
If $A,B$ are submanifolds of $X$, we will write
$$A\preccurlyeq B$$
whenever $A\subseteq B$ and each connected component of $A$ is a connected component of $B$.
Let $p$ be a prime.

\begin{lemma}
\label{lemma:subgroup-stable-general}
Let $\lambda$ be a natural number.
Let $\Gamma$ be a finite abelian $p$-group acting smoothly
on a compact manifold $X$ in a $\lambda$-stable way.
If a subgroup $\Gamma_0\leq \Gamma$ satisfies $[\Gamma:\Gamma_0]\leq\lambda$
then $X^{\Gamma}\preccurlyeq X^{\Gamma_0}$.
\end{lemma}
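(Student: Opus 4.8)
The plan is to verify the two conditions packaged in the relation $\preccurlyeq$. The inclusion $X^{\Gamma}\subseteq X^{\Gamma_0}$ is immediate from $\Gamma_0\leq\Gamma$, so the real content is to show that each connected component of $X^{\Gamma}$ is an \emph{entire} connected component of $X^{\Gamma_0}$. My strategy is to reduce this global statement to a pointwise one about tangent spaces: I will prove that for every $x\in X^{\Gamma}$ the fixed subspaces coincide, $(T_xX)^{\Gamma_0}=(T_xX)^{\Gamma}$, i.e.\ $T_xX^{\Gamma}=T_xX^{\Gamma_0}$ as subspaces of $T_xX$. Here I use Lemma \ref{lemma:linearization}, which lets me identify the tangent space to a fixed point set with the fixed subspace of the tangent representation, and which tells me both $X^{\Gamma}$ and $X^{\Gamma_0}$ are neat submanifolds.

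The heart of the argument, and the only place $\lambda$-stability is used, is this tangent-space equality. Fix $x\in X^{\Gamma}$ and consider the normal space $N_x=T_xX/T_xX^{\Gamma}$ with its linear $\Gamma$-action. Since $\Gamma$ is abelian, after complexifying $N_x$ splits as a sum of one-dimensional character spaces, and by construction every character $\theta\colon\Gamma\to\CC^*$ occurring in $N_x$ is exactly one of those appearing in condition (2) of $\lambda$-stability, so $[\Gamma:\Ker\theta]>\lambda$. The key observation is that, because $[\Gamma:\Gamma_0]\leq\lambda<[\Gamma:\Ker\theta]$, we cannot have $\Gamma_0\subseteq\Ker\theta$; hence $\theta|_{\Gamma_0}$ is nontrivial for every such $\theta$. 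Consequently $\Gamma_0$ fixes no nonzero vector of $N_x$, that is $(N_x)^{\Gamma_0}=0$ (this holds over $\RR$ since invariants under the finite group $\Gamma_0$ commute with the extension of scalars to $\CC$). Applying the left-exact invariants functor $(-)^{\Gamma_0}$ to the short exact sequence $0\to T_xX^{\Gamma}\to T_xX\to N_x\to 0$ of $\Gamma_0$-modules, and using $(T_xX^{\Gamma})^{\Gamma_0}=T_xX^{\Gamma}$, I conclude $(T_xX)^{\Gamma_0}=T_xX^{\Gamma}$, as wanted.

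With the tangent spaces equated, the remaining step is topological. The inclusion $X^{\Gamma}\subseteq X^{\Gamma_0}$ of neat submanifolds induces an isomorphism on tangent spaces at every point of $X^{\Gamma}$, so by the inverse function theorem $X^{\Gamma}$ is an open subset of $X^{\Gamma_0}$. Let $Y$ be a connected component of $X^{\Gamma}$ and let $Z$ be the connected component of $X^{\Gamma_0}$ containing it; then $Y$ is open in $Z$. On the other hand $Y$ is closed in $X$, being a connected component of the closed set $X^{\Gamma}$, so $Y=Y\cap Z$ is closed in $Z$ as well. Since $Z$ is connected and $Y$ is a nonempty open-and-closed subset, $Y=Z$. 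Thus every connected component of $X^{\Gamma}$ is a connected component of $X^{\Gamma_0}$, which is precisely $X^{\Gamma}\preccurlyeq X^{\Gamma_0}$.

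I expect the main obstacle to be conceptual rather than computational: recognizing that the definition of $\lambda$-stability is engineered precisely so that passing to a subgroup of index at most $\lambda$ cannot create any new fixed directions in the normal bundle. Once the index comparison $[\Gamma:\Gamma_0]\leq\lambda<[\Gamma:\Ker\theta]$ is in hand, the character-by-character computation of $(N_x)^{\Gamma_0}$ is routine, and the final open-and-closed argument in a connected component is standard.
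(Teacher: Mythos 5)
Your proof is correct and follows essentially the same route as the paper's: both reduce the statement to showing $\dim_x X^{\Gamma}=\dim_x X^{\Gamma_0}$ at each $x\in X^{\Gamma}$ via the linearization lemma, and both use $\lambda$-stability in exactly the same way, namely that $[\Gamma:\Gamma_0]\leq\lambda<[\Gamma:\Ker\theta]$ forbids any character $\theta$ of the normal representation from being trivial on $\Gamma_0$. Your explicit character-by-character computation of $(N_x)^{\Gamma_0}$ and the final open-and-closed argument just spell out details the paper delegates to Lemma \ref{lemma:linearization}(\ref{item:inclusio-propia-subvarietats-fixes}).
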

\begin{proof}
We clearly have $X^{\Gamma}\subseteq X^{\Gamma_0}$, so it suffices to prove that
for each $x\in X^{\Gamma}$ we have $\dim_xX^{\Gamma}=\dim_xX^{\Gamma_0}$.
If this is not the case for some $x\in X^{\Gamma}$ then, by
(\ref{item:inclusio-propia-subvarietats-fixes}) in
Lemma \ref{lemma:linearization}, there exist an irreducible $\Gamma$-submodule
of $T_xX/T_xX^{\Gamma}$ on which the action of $\Gamma_0$ is trivial.
Let $\theta\colon \Gamma\to\CC^*$
be the character associated to this submodule. Then $\Gamma_0\leq \Ker\theta$, which implies
that $[\Gamma:\Ker\theta]\leq\lambda$, contradicting the hypothesis that $\Gamma$ is $\lambda$-stable.
\end{proof}

\begin{lemma}
\label{lemma:existeix-element-generic-p}
Suppose that $\lambda\geq(\dim X)(\sum_jb_j(X;\FF_p))$, and
let $\Gamma$ be a finite abelian $p$-group acting on $X$ in a $\lambda$-stable way.
There exists an element $\gamma\in\Gamma$ such that
$X^{\Gamma}\preccurlyeq X^{\gamma}$.
\end{lemma}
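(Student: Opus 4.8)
The plan is to produce a single $\gamma$ whose fixed locus agrees with $X^{\Gamma}$ to first order along all of $X^{\Gamma}$. First I would note that $X^{\Gamma}\subseteq X^{\gamma}$ holds automatically for every $\gamma\in\Gamma$, so by Lemma \ref{lemma:linearization} (which makes each $X^{\gamma}$ a neat submanifold) the relation $X^{\Gamma}\preccurlyeq X^{\gamma}$ is equivalent to the equality $\dim_xX^{\Gamma}=\dim_xX^{\gamma}$ for every $x\in X^{\Gamma}$. At such an $x$ this amounts to $(T_xX)^{\gamma}=(T_xX)^{\Gamma}$, i.e. to $\gamma$ having no nonzero fixed vector in the normal module $N_x:=T_xX/T_xX^{\Gamma}$. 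Decomposing $N_x$ into characters $\theta\colon\Gamma\to\CC^*$, all nontrivial, this is exactly the condition $\theta(\gamma)\neq 1$, equivalently $\gamma\notin\Ker\theta$, for every $\theta$ occurring in $N_x$. So the task reduces to producing one $\gamma$ lying outside $\Ker\theta$ for every normal character $\theta$ at every point of $X^{\Gamma}$.

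Next I would bound the number of subgroups to be avoided. Since the isomorphism class of $N_x$, and hence the set of characters occurring in it, depends only on the connected component of $X^{\Gamma}$ containing $x$, it suffices to collect the characters component by component. By Lemma \ref{lemma:cohom-no-augmenta} the number of connected components of $X^{\Gamma}$ is at most $\sum_jb_j(X;\FF_p)$, and over each component the number of distinct characters occurring in $N_x$ is at most $\dim N_x\leq\dim X$. Hence the collection of subgroups $\{\Ker\theta\}$ that must be avoided consists of at most $(\dim X)(\sum_jb_j(X;\FF_p))\leq\lambda$ distinct proper subgroups of $\Gamma$.

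Finally I would run a counting argument. Each such $\Ker\theta$ is the kernel of a character occurring in a normal module, so by $\lambda$-stability $[\Gamma:\Ker\theta]>\lambda$, giving $|\Ker\theta|<|\Gamma|/\lambda$. With at most $\lambda$ such subgroups, the union bound yields $|\bigcup_\theta\Ker\theta|<\lambda\cdot|\Gamma|/\lambda=|\Gamma|$ (the case where there are no characters being trivial), so the union cannot be all of $\Gamma$. Any $\gamma\in\Gamma\setminus\bigcup_\theta\Ker\theta$ then satisfies $\theta(\gamma)\neq 1$ for every relevant $\theta$, hence $(T_xX)^{\gamma}=(T_xX)^{\Gamma}$ for all $x\in X^{\Gamma}$, which gives $X^{\Gamma}\preccurlyeq X^{\gamma}$ by the reduction of the first paragraph.

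The only real content is the bookkeeping in the middle step: getting the count of distinct kernels down to at most $\lambda$ so that the final pigeonhole goes through. The two inputs that make this work are that the normal characters are constant on components (so the component count from Lemma \ref{lemma:cohom-no-augmenta} controls one factor) and that the normal dimension is at most $\dim X$ (controlling the other). The index bound $>\lambda$ coming from $\lambda$-stability is precisely tuned so that at most $\lambda$ subgroups of index exceeding $\lambda$ cannot cover $\Gamma$; this is exactly where the hypothesis $\lambda\geq(\dim X)(\sum_jb_j(X;\FF_p))$ is used.
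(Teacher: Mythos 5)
Your proposal is correct and follows essentially the same route as the paper: bound the number of kernels $\Ker\theta$ of normal characters by $(\dim X)\bigl(\sum_j b_j(X;\FF_p)\bigr)$ using local constancy on components of $X^{\Gamma}$ together with Lemma \ref{lemma:cohom-no-augmenta}, then use the index bound $[\Gamma:\Ker\theta]>\lambda$ from $\lambda$-stability in a union bound to find $\gamma$ outside all the kernels. The only difference is that you spell out the reduction to tangent-space data that the paper delegates to Lemma \ref{lemma:linearization}.
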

\begin{proof}
Let $\nu(\Gamma)$ be the collection of subgroups of $\Gamma$ of the form
$\Ker\theta$, where $\theta\colon \Gamma\to\CC^*$ runs over the set of characters
appearing in the action of $\Gamma$ on the fibers of the normal bundle
of the inclusion of $X^{\Gamma}$ in $X$.
Since $\Gamma$ is finite, its representations are rigid, so the irreducible representations in
the action of $\Gamma$ on the normal fibers of the inclusion $X^{\Gamma}\hookrightarrow X$ are locally
constant on $X^{\Gamma}$. For each $x\in X^{\Gamma}$ the
representation of $\Gamma$ on $T_xX/T_x^{\Gamma}$ splits as the sum of at most
$\dim X$ different irreducible representations. Consequently, $\nu(\Gamma)$
has at most $\dim X |\pi_0(X^{\Gamma})|$ elements.
By Lemma \ref{lemma:cohom-no-augmenta}, $|\pi_0(X^{\Gamma})|\leq\sum_jb_j(X;\FF_p)$.
Since $\Gamma$ is $\lambda$-stable, we have $|\Gamma'|<\lambda^{-1}|\Gamma|$
for each $\Gamma'\in\nu(\Gamma)$, so
$$\left|\bigcup_{\Gamma'\in\nu(\Gamma)}\Gamma'\right|\leq \lambda^{-1}|\Gamma|
|\nu(\Gamma)|\leq \lambda^{-1}|\Gamma|\dim X\left(\sum_jb_j(X;\FF_p)\right)<|\Gamma|.$$
Consequently, there exists at least one element $\gamma\in\Gamma$ not contained in $\bigcup_{\Gamma'\in\nu(\Gamma)}\Gamma'$.
By Lemma \ref{lemma:linearization} we have $X^{\Gamma}\preccurlyeq X^{\gamma}$.
\end{proof}

\subsection{$\lambda$-stable actions: arbitrary abelian groups}
Let $X$ be a compact manifold and
let $\Gamma$ be a finite abelian group acting on $X$.
For any prime $p$ we denote by $\Gamma_{p}$ the $p$-part of $\Gamma$.
We say that the action of $\Gamma$ on $X$ is $\lambda$-stable if and only if
for any prime $p$ the restriction of the action to
$\Gamma_{p}$ is $\lambda$-stable (recall that any action of the trivial group is $\lambda$-stable).
As for $p$-groups, when the manifold $X$ and the action are clear from the context,
we will sometimes say that $\Gamma$ is $\lambda$-stable.

\begin{theorem}
\label{thm:existence-stable-subgroups}
Let $\lambda$ be a natural number.
There exists a constant $C_{\lambda}$, depending only on $X$ and $\lambda$,
such that any finite abelian group $\Gamma$ acting on $X$
has a $\lambda$-stable subgroup of index at most $C_{\lambda}$.
\end{theorem}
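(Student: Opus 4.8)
The plan is to reduce the statement to the $p$-group case, which is already settled by Lemmas \ref{lemma:exists-Gamma-chi-p} and \ref{lemma:exists-Gamma-stable-p}, and then to assemble the $p$-local data across all primes into a single bound. Write $\Gamma=\prod_p\Gamma_p$ as the direct sum of its $p$-parts, the product ranging over the finitely many primes dividing $|\Gamma|$. For each such $p$ I would first apply Lemma \ref{lemma:exists-Gamma-chi-p} to the restricted action of $\Gamma_p$ on $X$, obtaining a subgroup $\Gamma_{p,\chi}\leq\Gamma_p$ of index at most $C_{p,\chi}$ such that $\chi(X^{\Gamma_0})=\chi(X)$ for every subgroup $\Gamma_0\leq\Gamma_{p,\chi}$. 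This is precisely the hypothesis needed to invoke Lemma \ref{lemma:exists-Gamma-stable-p} on $\Gamma_{p,\chi}$: if $p>\lambda$ then $\Gamma_{p,\chi}$ is already $\lambda$-stable and I set $\Gamma_p':=\Gamma_{p,\chi}$; if $p\leq\lambda$ I obtain a $\lambda$-stable subgroup $\Gamma_p'\leq\Gamma_{p,\chi}$ whose index in $\Gamma_{p,\chi}$ is at most $\lambda^{e_p}$, with $e_p$ depending only on $\dim X$ and $\sum_jb_j(X;\FF_p)$.

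Next I would set $\Gamma':=\prod_p\Gamma_p'$, an internal direct sum inside the abelian group $\Gamma$. Since $\Gamma$ is abelian, the $p$-part of $\Gamma'$ is exactly $\Gamma_p'$, and its action on $X$ coincides with the restriction of the action of $\Gamma$. By construction each $\Gamma_p'$ acts $\lambda$-stably, so by the definition of $\lambda$-stability for arbitrary abelian groups the action of $\Gamma'$ is $\lambda$-stable. The index then factors as $[\Gamma:\Gamma']=\prod_p[\Gamma_p:\Gamma_p']$, each factor being bounded by $C_{p,\chi}\,\lambda^{e_p}$ (with the convention $\lambda^{e_p}=1$ whenever $p>\lambda$).

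The key point — and the only place where uniformity is not automatic — is that this last product must be finite and bounded independently of $\Gamma$. The number of primes dividing $|\Gamma|$ is unbounded, so multiplying per-prime estimates naively would be fatal; the resolution is to exploit the \emph{furthermore} clauses of both lemmas. Lemma \ref{lemma:exists-Gamma-chi-p} provides a threshold $P_\chi$ beyond which $C_{p,\chi}=1$, whence $\Gamma_{p,\chi}=\Gamma_p$, and Lemma \ref{lemma:exists-Gamma-stable-p} guarantees that for $p>\lambda$ no passage to a proper subgroup is needed. Hence for every prime $p>\max\{P_\chi,\lambda\}$ one has $\Gamma_p'=\Gamma_p$, so the corresponding index factor is $1$. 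Only the finitely many primes $p\leq\max\{P_\chi,\lambda\}$ contribute, and for each of these both $C_{p,\chi}$ and $e_p$ depend only on $X$. Therefore
$$C_\lambda:=\prod_{p\leq\max\{P_\chi,\lambda\}}C_{p,\chi}\,\lambda^{e_p}$$
is a finite constant depending only on $X$ and $\lambda$, and $[\Gamma:\Gamma']\leq C_\lambda$, which finishes the proof. The main obstacle to watch is precisely this uniformity over the set of primes; once the two threshold statements are brought in it dissolves, and the remaining verifications (that the $p$-part of the direct sum is the corresponding factor, and that $\lambda$-stability is inherited component by component) are routine.
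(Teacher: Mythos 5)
Your proposal is correct and follows essentially the same route as the paper: decompose $\Gamma$ into its $p$-parts, apply Lemma \ref{lemma:exists-Gamma-chi-p} and then Lemma \ref{lemma:exists-Gamma-stable-p} to each factor, and use the thresholds $P_{\chi}$ and $\lambda$ to see that only finitely many primes contribute a nontrivial index factor. The paper states this very tersely, and your write-up supplies exactly the intended details (the only cosmetic difference being that the paper uses a single exponent $e$ built from $K=\sum_j\max_p b_j(X;\FF_p)$ rather than prime-dependent exponents $e_p$).
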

\begin{proof}
Let $P_{\chi}$ be the number defined in Lemma \ref{lemma:exists-Gamma-chi-p}.
Define
$$C_{\lambda}:=\left(\prod_{p\leq P_{\chi}}C_{p,\chi}\right)\left(\prod_{p\leq\lambda}\lambda^e\right),$$
where in both products $p$ runs over the set of primes satisfying the inequality and
$$e=\left( m+K+1\atop m+1\right),\qquad m=\dim X,\qquad K=\sum_j\max\{b_j(X;\FF_p)\mid p\text{ prime}\}.$$
The theorem follows from combining Lemmas \ref{lemma:exists-Gamma-chi-p} and
Lemma \ref{lemma:exists-Gamma-stable-p} applied to each of the factors
of $\Gamma\simeq\prod_{p|d}\Gamma_p$, where $d=|\Gamma|$.
\end{proof}

\subsection{$\lambda$-stable actions on manifolds without odd cohomology}
\label{s:K-stable-actions-no-odd-cohomology}

In this section $X$ denotes a
manifold without odd cohomology.
Let $p$ be any prime number.
Applying cohomology to the exact sequence of locally constant sheaves on $X$
\begin{equation*}
0\to\un{\ZZ}\stackrel{\cdot p}{\longrightarrow}\un{\ZZ}
\to \un{\FF_p}
\to 0
\end{equation*}
and using the fact that $X$ has no odd cohomology we obtain
\begin{equation}
\label{eq:betti-numbers-the-same}
b_j(X;\FF_p)=b_j(X)\quad\text{for any $j$}\qquad\Longrightarrow\qquad
\chi(X)=\sum_jb_j(X)=\sum_jb_j(X;\FF_p).
\end{equation}

\begin{lemma}
\label{lemma:no-odd-cohomology}
Let $p$ be any prime
number. Suppose that a finite abelian $p$-group $\Gamma$ acts on $X$
and that there is a subgroup $\Gamma'\leq \Gamma$ such that
$X^{\Gamma}\preccurlyeq X^{\Gamma'}$ and $\chi(X^{\Gamma})=\chi(X^{\Gamma'})=\chi(X)$.
Then $X^{\Gamma}=X^{\Gamma'}$.
\end{lemma}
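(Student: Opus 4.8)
The plan is to isolate the connected components of $X^{\Gamma'}$ that do not already lie in $X^{\Gamma}$, show they contribute nothing to the Euler characteristic, and then rule them out using a positivity property inherited from the absence of odd cohomology. Since $X^{\Gamma}\preccurlyeq X^{\Gamma'}$, the fixed locus $X^{\Gamma}$ is a union of connected components of $X^{\Gamma'}$, so we may write $X^{\Gamma'}=X^{\Gamma}\sqcup W$, where $W$ is the (possibly empty) union of the remaining components. By additivity of the Euler characteristic and the hypothesis $\chi(X^{\Gamma})=\chi(X^{\Gamma'})$, we get $\chi(W)=0$. It therefore suffices to prove that $W=\emptyset$, and for this it is enough to show that every connected component of $X^{\Gamma'}$ has strictly positive Euler characteristic.

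The key step is to show that $X^{\Gamma'}$ has no odd $\FF_p$-cohomology, which I would establish by squeezing a chain of inequalities. Since the Euler characteristic can be computed with $\FF_p$ coefficients, we have $\chi(X^{\Gamma'})=\sum_j(-1)^jb_j(X^{\Gamma'};\FF_p)\leq\sum_jb_j(X^{\Gamma'};\FF_p)$. Next, $\Gamma'$ is a finite abelian $p$-group acting on $X$, so Lemma \ref{lemma:cohom-no-augmenta} gives $\sum_jb_j(X^{\Gamma'};\FF_p)\leq\sum_jb_j(X;\FF_p)$. Finally, because $X$ has no odd cohomology, (\ref{eq:betti-numbers-the-same}) yields $\sum_jb_j(X;\FF_p)=\chi(X)$, and by hypothesis $\chi(X)=\chi(X^{\Gamma'})$. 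Thus both inequalities above are in fact equalities; in particular the identity $\sum_j(-1)^jb_j(X^{\Gamma'};\FF_p)=\sum_jb_j(X^{\Gamma'};\FF_p)$ forces $b_j(X^{\Gamma'};\FF_p)=0$ for every odd $j$.

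To conclude, observe that the vanishing of the odd $\FF_p$-Betti numbers passes to each connected component $Y$ of $X^{\Gamma'}$, since $\FF_p$-cohomology is additive over components; hence $\chi(Y)=\sum_jb_j(Y;\FF_p)\geq b_0(Y;\FF_p)=1$, as $Y$ is nonempty and connected. Therefore $\chi(W)$ is a sum of strictly positive integers, one for each component of $W$, and $\chi(W)=0$ is possible only if $W$ has no components, i.e. $W=\emptyset$. This gives $X^{\Gamma}=X^{\Gamma'}$, as desired. The main obstacle is the middle step: one must notice that combining Smith theory (Lemma \ref{lemma:cohom-no-augmenta}) with the equality of Euler characteristics and the identity (\ref{eq:betti-numbers-the-same}) does not merely \emph{bound} the total $\FF_p$-Betti number of $X^{\Gamma'}$ but pins it down \emph{exactly}, which is precisely what forces the odd $\FF_p$-cohomology of the fixed locus to vanish and hence makes each component's Euler characteristic positive; everything else is formal.
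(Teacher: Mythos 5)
Your proof is correct and uses essentially the same argument as the paper: the squeeze $\chi(X^{\Gamma'})\leq\sum_jb_j(X^{\Gamma'};\FF_p)\leq\sum_jb_j(X;\FF_p)=\chi(X)$ via Lemma \ref{lemma:cohom-no-augmenta} and (\ref{eq:betti-numbers-the-same}), forced to be an equality by the hypothesis. The only cosmetic difference is in the last step: the paper compares the total $\FF_p$-Betti sums of $X^{\Gamma}$ and $X^{\Gamma'}$ directly and notes that any extra component would contribute at least $1$ to $b_0$, whereas you extract the vanishing of the odd $\FF_p$-Betti numbers to get strict positivity of $\chi$ on each component; both are valid ways to finish the identical squeeze.
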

\begin{proof}
Combining (\ref{eq:betti-numbers-the-same}) and Lemma
\ref{lemma:cohom-no-augmenta} we deduce:
$$\chi(X^{\Gamma})\leq \sum_jb_j(X^{\Gamma};\FF_p)\leq\sum_jb_j(X;\FF_p)=\chi(X).$$
Since $\chi(X^{\Gamma})=\chi(X)$ we have
$\sum_jb_j(X^{\Gamma};\FF_p)=\sum_jb_j(X;\FF_p).$ Applying the same arguments to $\Gamma'$
we conclude that
$\sum_jb_j(X^{\Gamma};\FF_p)=\sum_jb_j(X^{\Gamma'};\FF_p)$. Combining this
with $X^{\Gamma}\preccurlyeq X^{\Gamma'}$ we deduce $X^{\Gamma}=X^{\Gamma'}$.
\end{proof}

\begin{lemma}
\label{lemma:good-p-Gamma-has-gamma}
Let $\lambda_\chi=\chi(X)\dim X$.
If $\Gamma$ is a finite abelian $p$-group acting on $X$ in a $\lambda_\chi$-stable way,
then there exists some $\gamma\in X$ such that $X^{\Gamma}=X^{\gamma}$.
\end{lemma}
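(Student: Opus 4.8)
The plan is to assemble two earlier results, Lemma \ref{lemma:existeix-element-generic-p} (the existence of a ``generic'' element) and Lemma \ref{lemma:no-odd-cohomology} (the no-odd-cohomology rigidity of fixed loci). The whole point of the definition $\lambda_\chi=\chi(X)\dim X$ is that it is tuned exactly so that the generic-element lemma becomes applicable, so the argument is mostly a matter of checking hypotheses. (I read the conclusion as producing $\gamma\in\Gamma$, not $\gamma\in X$.)

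First I would exploit the no-odd-cohomology hypothesis. By the implication recorded in \eqref{eq:betti-numbers-the-same}, we have $b_j(X;\FF_p)=b_j(X)$ for every $j$ and $\sum_j b_j(X;\FF_p)=\chi(X)$. Hence $\lambda_\chi=\chi(X)\dim X=(\dim X)\bigl(\sum_j b_j(X;\FF_p)\bigr)$, which is precisely the threshold appearing in the hypothesis of Lemma \ref{lemma:existeix-element-generic-p}. Since $\Gamma$ acts $\lambda_\chi$-stably and $\lambda_\chi\geq(\dim X)(\sum_j b_j(X;\FF_p))$, that lemma yields an element $\gamma\in\Gamma$ with $X^{\Gamma}\preccurlyeq X^{\gamma}$.

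Next I would set $\Gamma'=\la\gamma\ra$ and prepare to invoke Lemma \ref{lemma:no-odd-cohomology}. Note $X^{\la\gamma\ra}=X^{\gamma}$, because each stabilizer $G_x$ is a subgroup, so $\gamma\in G_x$ if and only if $\la\gamma\ra\subseteq G_x$; thus $X^{\Gamma}\preccurlyeq X^{\la\gamma\ra}$. For the Euler characteristic conditions I would use clause (1) in the definition of $\lambda$-stability, which gives $\chi(X^{\Gamma_0})=\chi(X)$ for \emph{every} subgroup $\Gamma_0\leq\Gamma$; applying this to $\Gamma_0=\Gamma$ and to $\Gamma_0=\la\gamma\ra$ gives $\chi(X^{\Gamma})=\chi(X^{\la\gamma\ra})=\chi(X)$. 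With $X^{\Gamma}\preccurlyeq X^{\la\gamma\ra}$ and these equalities in place, Lemma \ref{lemma:no-odd-cohomology} forces $X^{\Gamma}=X^{\la\gamma\ra}=X^{\gamma}$, which is the desired conclusion.

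There is no genuine obstacle here: the proof is a short bookkeeping assembly of the preceding lemmas. The only thing to ``see'' is the numerical coincidence that, under the no-odd-cohomology assumption, $\sum_j b_j(X;\FF_p)=\chi(X)$ turns $\lambda_\chi$ into exactly the constant $(\dim X)(\sum_j b_j(X;\FF_p))$ required to run the generic-element argument, after which the rigidity lemma upgrades the inclusion $\preccurlyeq$ to an equality.
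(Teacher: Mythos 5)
Your proof is correct and is exactly the paper's argument: the paper's own proof is the one-line "combine Lemma \ref{lemma:existeix-element-generic-p}, equality (\ref{eq:betti-numbers-the-same}), and Lemma \ref{lemma:no-odd-cohomology}," and you have simply written out the hypothesis-checking (including the key identification $\lambda_\chi=(\dim X)\sum_j b_j(X;\FF_p)$ and the use of clause (1) of $\lambda$-stability for the Euler characteristic equalities). You are also right that the statement's ``$\gamma\in X$'' is a typo for $\gamma\in\Gamma$.
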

\begin{proof}
This follows from combining Lemma \ref{lemma:existeix-element-generic-p},
 equality (\ref{eq:betti-numbers-the-same}), and Lemma \ref{lemma:no-odd-cohomology}.
\end{proof}

\section{Proof of Theorem \ref{thm:main-abelian}}
\label{s:proof-thm:main-abelian}

Let $X$ be a compact manifold without odd cohomology. Let $A$ be a finite
abelian group acting
smoothly on $X$.

\newcommand{\Mat}{\operatorname{Mat}}

Let $b_j=b_j(X)$ and let $b=\sum_jb_j^2$.
We claim that there exists a subgroup $G\leq A$ whose action on
the cohomology $H^*(X;\ZZ)$ is trivial and which satisfies
$$[A:G]\leq 3^{b}.$$
To prove the claim, recall that a well known lemma of Minkowski states
that for any $n$ and any finite group $H\leq\GL(n,\ZZ)$ the restriction
of the quotient map $$q_n:\GL(n,\ZZ)\to\GL(n,\FF_3)$$ to $H$ is injective (see e.g. \cite{Mi,S1}; the proof
is easy: it suffices to check that for any nonzero $M\in\Mat_{n\times n}(\ZZ)$ and nonzero
integer $k$ the matrix $(\Id_n+3M)^k$ is different from the identity, see e.g. \cite[V.3.4]{FK}). Choosing a homogeneous basis of $H^*(X;\ZZ)$
the action of $A$ on the cohomology can be encoded in a morphism of groups
$$\phi:A\to\prod_j\GL(b_j,\ZZ).$$ Then
$$G:=\Ker (q\circ\phi),\qquad q=(q_{b_0},\dots,q_{b_n}),\qquad n=\dim X$$
has the required property, because $|\prod_j\GL(b_j,\FF_3)|\leq 3^{b}$.

Let $\lambda_{\chi}=\chi(X)\dim X$. By Theorem \ref{thm:existence-stable-subgroups}
there exists a subgroup $\Gamma\leq G$ whose action on $X$ is $\lambda_{\chi}$-stable
and which satisfies $[G:\Gamma]\leq C_{\lambda_{\chi}}$, where $C_{\lambda_{\chi}}$ depends
on $\lambda_{\chi}$ and $X$, but not on the group $G$.

There is an isomorphism $\Gamma\simeq \Gamma_{p_1}\times\dots\times \Gamma_{p_k}$,
where $p_1,\dots,p_k$ are the prime divisors of $|\Gamma|$.
Since the action of $\Gamma$ is $\lambda_\chi$-stable so is, by definition,
its restriction to each $\Gamma_{p_i}$, so by
Lemma \ref{lemma:good-p-Gamma-has-gamma} there exists, for each $i$, an element
$\gamma_i\in\Gamma_{p_i}$ such that $X^{\gamma_i}=X^{\Gamma_{p_i}}$.
Let $\gamma=\gamma_1\dots\gamma_k$. Then
$X^{\Gamma}=\bigcap_i X^{\Gamma_i}\subseteq X^{\gamma}.$
By the Chinese remainder theorem and the fact that the elements $\gamma_1,\dots,\gamma_k$
commute,
for each $i$ there exists some $e$ such that $\gamma^e=\gamma_i$.
Hence
$X^{\gamma}\subseteq X^{\gamma^e}=X^{\gamma_i}=X^{\Gamma_{p_i}}.$
Taking the intersection over all $i$ we get
$X^{\gamma}\subset \bigcap_i X^{\Gamma_i}=X^{\Gamma}.$
Combining the two inclusions we have $X^{\gamma}=X^{\Gamma}$.

Since $\gamma\in G$, the action of $\gamma$ on $X$ induces the trivial action
on $H^*(X;\ZZ)$, so in particular it preserves the connected components of $X$.
Let $Y\subseteq X$ be any connected component. Applying Lefschetz's formula
\cite[Exercise 6.17.3]{D} to the action of $\gamma$ on $Y$ we conclude that
$\chi(Y^{\gamma})=\chi(Y)$. Since $Y^{\gamma}=Y^{\Gamma}$, it follows
that $A_0:=\Gamma$ has the desired properties. Furthermore,
$$[A:A_0]\leq 3^{b}C_{\lambda_{\chi}}.$$
In the course of our arguments no information on $X$ apart from its
dimension and $H^*(X;\ZZ)$ has been used, so the proof of the theorem is now complete.

\section{Proofs of Theorems \ref{thm:fixed-pt-disks} and \ref{thm:fixed-pt-spheres}}
\label{s:proofs-thm:fixed-pt}

\subsection{Proof of Theorem \ref{thm:fixed-pt-disks}}

The following well known fact immediately proves the theorem in
the cases $n=1,2$.

\begin{lemma}
\label{lemma:fixed-point-disk-low-dim} Let $n$ be either $1$ or
$2$, and let $X$ be the $n$-dimensional disk. The fixed point
locus of any smooth action of a finite group on $X$ is
contractible.
\end{lemma}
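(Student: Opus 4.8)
The plan is to treat the two dimensions separately: the case $n=1$ by an elementary monotonicity argument, and the case $n=2$ by reducing the action to an orthogonal one.

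For $n=1$ I would set $X=[-1,1]$. Every diffeomorphism of $X$ permutes the two boundary points, which gives a homomorphism $G\to\ZZ/2$ recording whether an element preserves or reverses orientation. The kernel consists of orientation-preserving diffeomorphisms fixing both endpoints, i.e. strictly increasing maps $f$ with $f(\pm1)=\pm1$; if $f(x)>x$ for some interior $x$ then $f^k(x)$ is strictly monotone in $k$, contradicting the finiteness of the order of $f$, so the kernel is trivial and $G$ is either trivial or $\ZZ/2$. If $G$ is trivial then $X^G=X$; if $G=\la f\ra$ with $f$ orientation-reversing, then $f$ is a decreasing involution, so $x\mapsto f(x)-x$ is strictly decreasing and changes sign exactly once, giving a single fixed point. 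In both cases $X^G$ is contractible.

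The substantive case is $n=2$, and the heart of it is the classical fact that a smooth finite group action on $D^2$ is smoothly conjugate to the restriction of a linear action, i.e. to a finite subgroup of $\O(2)$. My plan for this step is to average a Riemannian metric so that $G$ acts by isometries, pass to the induced conformal structure on $X$, and invoke the Riemann mapping theorem (equivalently, double $X$ across $\partial X$ to obtain a conformal structure on $\SS^2\cong\CP^1$ on which $G$ acts conformally) to conformally identify $X$ with the standard disk; a finite group of conformal automorphisms of the disk is then conjugate into $\O(2)$. I expect \emph{this linearization to be the main obstacle}; the remaining arguments are elementary. (One could alternatively invoke Smith theory to get $\FF_p$-acyclicity of $X^G$, but that only handles $p$-groups, so it does not suffice for an arbitrary finite $G$.)

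Once the action is orthogonal, I would finish by enumerating fixed loci. A finite subgroup $H\leq\O(2)$ is either cyclic, consisting of rotations, or dihedral, containing reflections. Its fixed point set in $D^2$ is the whole disk when $H$ is trivial, the center point when $H$ contains a nontrivial rotation, and a single diameter when $H$ is generated by one reflection; each of these is contractible. Since by Lemma \ref{lemma:linearization} the set $X^G$ is precisely this neat submanifold, it is contractible, which is exactly the claim (and in particular yields $\chi(X^G)=1$, as needed in Theorem \ref{thm:fixed-pt-disks} for $n=1,2$).
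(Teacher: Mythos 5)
Your proof is correct. Note that the paper itself offers no argument for this lemma at all --- it is simply asserted as ``well known'' --- so there is nothing to compare against; your write-up supplies the missing substantiation. The $n=1$ case via monotonicity of orbits of an increasing interval map and the intermediate value theorem for a decreasing involution is fine. For $n=2$ you correctly identify the linearization of the action as the real content: averaging a metric and uniformizing the resulting bordered Riemann surface (or its double) conjugates $G$ into the conformal/anticonformal automorphisms of the standard disk, and a finite subgroup there fixes an interior point (hyperbolic barycenter), hence is conjugate into $\O(2)$; the enumeration of fixed sets of finite subgroups of $\O(2)$ (all of $D^2$, the origin, or a diameter) then closes the argument. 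Two small points of precision: your trichotomy ``cyclic of rotations or dihedral'' omits the order-two group generated by a single reflection as stated, but you treat that case explicitly in the fixed-set enumeration, so nothing is lost; and the appeal to Lemma \ref{lemma:linearization} at the end is unnecessary --- all you need is that the conjugating diffeomorphism carries $X^G$ to the fixed set of the orthogonal action.
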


Let now $n\geq 3$ be a natural number and let $X$ be the
$n$-dimensional disk.
Let $p$ be a prime and suppose that a finite abelian $p$-group
$A$ acts on $X$. Smith theory implies that the fixed point
set $X^{A}$ is $\FF_p$-acyclic (see \cite[Corollary III.4.6]{B} for the case
$A=\ZZ/p$ and use induction on $|A|$ for the general case, as
in the proof of Lemma \ref{lemma:cohom-no-augmenta} given in \cite{M7}).
In particular, $X^{A}$ is nonempty and connected.

\begin{lemma}
\label{lemma:primer-p-disk} Let $x\in X^{A}$ be any point,
and let $\theta_1,\dots,\theta_r$
be the different (real)
irreducible representations of $A$ appearing in
$T_xX/T_xX^{A}$. Let $l=\dim X^{A}$.
\begin{enumerate}
\item If $p=2$ then $r\leq n-l$. If $p$ is odd then $n-l$
    is even and $r\leq (n-l)/2$.
\item There exists some $\gamma\in A$ and a subgroup
    $A'\leq A$ such that
    $X^{\gamma}=X^{A'}$ and $[A:A']$ divides
    $p^{[r/p]}$.
\end{enumerate}
\end{lemma}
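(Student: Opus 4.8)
The plan is to treat the two parts separately: part (1) is a dimension count for the normal representation, while part (2) rests on a weighted averaging over $A$.

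For part (1) I would work with the normal representation $N=T_xX/T_xX^{A}$, which by Lemma \ref{lemma:linearization} is a real $A$-module of real dimension $n-l$ with no trivial summand, decomposing as $N\simeq\bigoplus_i m_i\theta_i$ with each $m_i\ge 1$. If $p$ is odd then $A$ has no element of order $2$, so there is no nontrivial real character $A\to\{\pm1\}$ and every $\theta_i$ is $2$-dimensional of complex type; hence $n-l=\sum_i 2m_i$ is even and at least $2r$, giving $r\le(n-l)/2$. If $p=2$ then each $\theta_i$ has real dimension at least $1$, so $n-l=\sum_i m_i\dim\theta_i\ge r$.

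For part (2) I would attach to each real irreducible $\theta_i$ a complex character $\theta_i\colon A\to\CC^*$ with the same kernel (either one of the conjugate pair when $\theta_i$ is $2$-dimensional), and for $\gamma\in A$ set $I(\gamma)=\{i:\theta_i(\gamma)=1\}$ and $A'=\bigcap_{i\in I(\gamma)}\Ker\theta_i$, so that $[A:A']=|\la\theta_i:i\in I(\gamma)\ra|$ is a power of $p$. The first key point is $X^{A'}=X^{\gamma}$: clearly $\gamma\in A'$, whence $X^{A'}\subseteq X^{\gamma}$; conversely, if $\theta_j$ lies in $\la\theta_i:i\in I(\gamma)\ra$ then $\theta_j(\gamma)=1$, so the set of characters trivial on $A'$ is exactly $\{\theta_i:i\in I(\gamma)\}$, and therefore $X^{A'}$ and $X^{\gamma}$ have the same normal space at any $x\in X^{A}$. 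Since both are connected $\FF_p$-acyclic neat submanifolds containing $X^{A}$, the inclusion $X^{A'}\subseteq X^{\gamma}$ of connected equidimensional manifolds forces equality.

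The heart of the matter is to choose $\gamma$ so that $\log_p|\la\theta_i:i\in I(\gamma)\ra|\le[r/p]$. The naive estimate $|\la\theta_i:i\in I(\gamma)\ra|\le p^{|I(\gamma)|}$ fails, because a single character of large order $p^{a}$ already contributes a factor $p^{a}$; this is the main obstacle. I would resolve it by weighting: writing $\ord\theta_i=p^{a_i}$, one has the crude bound $|\la\theta_i:i\in I(\gamma)\ra|\le\prod_{i\in I(\gamma)}\ord\theta_i=p^{\sum_{i\in I(\gamma)}a_i}$, and averaging $\sum_{i\in I(\gamma)}a_i$ over all $\gamma\in A$ gives $\sum_i a_i\,|\Ker\theta_i|/|A|=\sum_i a_i p^{-a_i}$. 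The elementary inequality $a p^{-a}\le 1/p$ for $a\ge 1$ bounds this by $r/p$, so some $\gamma$ satisfies $\sum_{i\in I(\gamma)}a_i\le r/p$, hence (being an integer) $\le[r/p]$; this $\gamma$ and the associated $A'$ have the required properties. The delicate point to verify is precisely that a character of large order $p^{a_i}$ has correspondingly small kernel $|\Ker\theta_i|=|A|p^{-a_i}$, so that the weight $a_i$ is exactly compensated by the probability $p^{-a_i}$ that $\gamma$ falls into $\Ker\theta_i$.
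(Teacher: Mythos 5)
Your proposal is correct and follows essentially the same route as the paper: the same weighted averaging of $\sum_{j\,:\,\gamma\in\Ker\theta_j}\log_p[A:\Ker\theta_j]$ over $\gamma\in A$ using the inequality $e/p^{e}\le 1/p$, the same choice $A'=\bigcap_{j\,:\,\gamma\in\Ker\theta_j}\Ker\theta_j$, and the same Smith-theory connectivity plus tangent-space comparison to get $X^{\gamma}=X^{A'}$. The only difference is that you spell out part (1), which the paper dismisses as clear.
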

\begin{proof}
(1) is clear. To prove (2), define $A_j:=\Ker\theta_j$,
let $e_j=\log_p[A:A_j]$, and consider the function
$I:A\to\ZZ$ defined as $I(\gamma)=\sum_{j\mid
\gamma\in A_j}e_j.$ Since each $\theta_j$ is nontrivial
(e.g. by (2) in Lemma \ref{lemma:linearization}) we have
$e_j\geq 1$ for every $j$. Now we estimate
$$\sum_{\gamma\in A}I(\gamma)=\sum_{\gamma\in A}\sum_{j\mid\gamma\in A_j}e_j=
\sum_{j=1}^r|A_j|e_j=\sum_{j=1}^r\frac{|A|}{p^{e_j}}e_j\leq
\sum_{j=1}^r\frac{|A|}{p}=\frac{r}{p}|A|,$$ where the
inequality follows from the fact that the function $\NN\ni
n\mapsto n/q^n$ is non increasing for any integer $q\geq 2$.
Hence the average value of $I$ is not bigger than $r/p$, so
there exists some $\gamma\in A$, which we fix for the rest
of the argument, such that $I(\gamma)\leq [r/p]$. Let
$A':=\bigcap_{j\mid\gamma\in A_j}A_j$.

We claim that $X^{\gamma}=X^{A'}$. Since
$\gamma\in A'$, the inclusion $X^{A'}\subseteq
X^{\gamma}$ is clear. To prove the reverse inclusion observe
that, by Smith theory, both $X^{A'}$ and $X^{\gamma}$ are
acyclic, hence connected, so it suffices to prove that
$T_xX^{\gamma}\subseteq T_xX^{A}$. Suppose that $\theta_j$ maps $A$ to $\GL(W_j)$.
Let $T_xX/T_xX^{A}=V_1\oplus\dots\oplus V_r$ be the
decomposition in isotypical real representations of $A$,
where $V_j$ is isomorphic to the direct sum of a number of copies of $W_j$.
Since we clearly have $$T_xX^{A}\oplus\bigoplus_{j\mid
\gamma\in A_j}V_j\subseteq T_xX^{A'},$$ it suffices to
prove that
$$T_xX^{\gamma}\subseteq
T_xX^{A}\oplus\bigoplus_{j\mid \gamma\in A_j}V_j.$$
The latter is equivalent to proving that $T_xX^{\gamma}\cap
V_i=\Ker(\theta_i^V(\gamma)-\Id)=\{0\}$ for every $i$ such that
$\gamma\notin A_i$ (here $\theta_i^V:A\to\GL(V_j)$ is
given by restricting the action of $A$ on
$T_xX/T_xX^{A}$ to $V_j$). Since $V_i$ is isotypical and $\gamma$ is
central in $A$, $\Ker(\theta_i^V(\gamma)-\Id)$ is either
$\{0\}$ or $V_i$. But $\Ker(\theta_i^V(\gamma)-\Id)=V_i$ would
imply $\gamma\in\Ker\theta_i$, contradicting the choice of $i$.
Hence $\Ker(\theta_i^V(\gamma)-\Id)=\{0\}$ and the proof that
$X^{\gamma}=X^{A'}$ is complete.

To finish the proof of the lemma we observe that
$[A:A']$ divides $\prod_{j\mid
\gamma\in A_j}[A:A_j]=p^{I(\gamma)}$. Since
$I(\gamma)\leq [r/p]$, we deduce that $[A:A']$
divides $p^{[r/p]}$.
\end{proof}

Now let $A$ be a finite abelian group acting on $X$. For
each prime $p$ let $A_p\leq A$ denote the
$p$-part of $A$, so that $A=\prod_pA_p$. If for
some $p$ we have $\dim X^{A_p}\leq 2$, then the
classification of manifolds (with boundary) of dimension at
most $2$ implies that $\dim X^{A_p}$ is a disk, because
$\dim X^{A_p}$ is $\FF_p$-acyclic; so applying Lemma
\ref{lemma:fixed-point-disk-low-dim} to the action of $A$
on $X^{A_p}$ we deduce that
$\chi(X^{A})=\chi((X^{A_p})^{A})=1$ and the
proof of the theorem is complete.

Hence it suffices to consider the case when $\dim
X^{A_p}\geq 3$ for each $p$. Let $k=[(n-3)/2]$. Applying
Lemma \ref{lemma:primer-p-disk} for each prime $p$ we deduce
that there exists some subgroup $A_2'\leq A_2$
satisfying $[A_2:A_2']\leq 2^k$ and an element
$\gamma_2\in A_2'$ such that $X^{\gamma_2}=X^{A_2'}$
and, for each odd prime $p$, there exists some subgroup
$A_p'\leq A_p$ satisfying
$[A_p:A_p']\leq p^{[k/p]}$ and an element
$\gamma_p\in A_p'$ such that $X^{\gamma_p}=X^{A_p'}$.
Let $A'=\prod A_p'$ and let $\gamma=\prod\gamma_p$.
Arguing as in Section \ref{s:proof-thm:main-abelian} we prove
that $X^{\gamma}=X^{A'}$. Clearly $[A:A']$
divides $f(k)$, so statement (1) of Theorem
\ref{thm:fixed-pt-disks} is proved. Statement (2) follows
immediately from statement (1), because none of the odd prime
divisors of $f(k)$ is bigger than $k$.

\subsection{Proof of Theorem \ref{thm:fixed-pt-spheres}}
We follow a scheme similar to the proof of Theorem
\ref{thm:fixed-pt-disks}. Let $m$ be a natural number, let $p$
be a prime and let $Y$ be a $\FF_p$-homology $2m$-sphere. For
any smooth action of $\ZZ/p$ on $X$ the fixed point set is a
$\FF_p$-homology $s$-sphere \cite[IV.4.3]{B}. Furthermore, if
$p$ is odd, the difference $2m-s$ is even \cite[IV.4.4]{B}.
Hence we may apply the same inductive scheme as in Lemma
\ref{lemma:cohom-no-augmenta} (or \cite[IV.4.5]{B}) and deduce
the following.

\begin{lemma}
\label{lemma:sphere-p-odd} For any odd prime $p$, any finite
$p$-group $A$, and any action of $A$ on a
$\FF_p$-homology even dimensional sphere the fixed point
set is a $\FF_p$-homology even dimensional sphere
(in particular, the fixed point set is nonempty).
\end{lemma}

The case $p=2$ works differently.
Suppose that $Y$ is a smooth $n$-dimensional manifold and that
$Y$ is a $\FF_2$-homology $n$-sphere. Suppose that $\ZZ/2$
acts smoothly on $Y$. Then $Y^{\ZZ/2}$ is a $\FF_2$-homology $s$-sphere
\cite[IV.4.3]{B}. The fixed point set $Y^{\ZZ/2}$ is also a smooth submanifold
of $Y$ and $s$ coincides with the dimension of $Y^{\ZZ/2}$ as a manifold.
The condition that $Y$ is a $\FF_2$-homology sphere implies that $Y$ is compact and orientable.
One checks, using Lefschetz' formula
\cite[Exercise 6.17.3]{D} and arguing in terms of volume forms,
that $n-s$ is even if and only if the action of the
nontrivial element of $\ZZ/2$ on $Y$ is orientation preserving
(this works more generally for continuous actions on finite Hausdorff spaces
whose integral homology is finitely generated and whose $\FF_2$-homology
is isomorphic to $H_*(S^n;\FF_2)$,
by a theorem of Liao \cite{L}, see also \cite[IV.4.7]{B}).

\begin{lemma}
\label{lemma:sphere-2} For any finite $2$-group $A$ and
any smooth action of $A$ on a smooth
$\FF_2$-homology $2m$-sphere ($m\in\ZZ_{\geq 0}$) there exists a
subgroup $A_0\leq A$ whose index
$[A:A_0]$ divides $2^{m+1}$ and whose fixed point set
$X^{A_0}$ is a smooth $\FF_2$-homology even dimensional sphere
(in particular, $X^{A_0}$ is nonempty).
\end{lemma}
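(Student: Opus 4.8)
The plan is to argue by induction on $m$, establishing the statement simultaneously for all smooth finite $2$-group actions on smooth $\FF_2$-homology $2m$-spheres. Before the induction I would reduce to an effective action: if $K\leq A$ is the kernel of the action, then any subgroup $\bar A_0\leq A/K$ of the required index with $X^{\bar A_0}$ an even homology sphere pulls back to the subgroup $A_0\leq A$ containing $K$ with $X^{A_0}=X^{\bar A_0}$ and $[A:A_0]=[A/K:\bar A_0]$, so I may assume $A$ acts effectively (this also disposes of a trivial action, where $A=1$ and $A_0=A$). For the base case $m=0$ the manifold $X$ is a pair of points which $A$ permutes, and taking $A_0$ to be the kernel of the resulting homomorphism $A\to\ZZ/2$ gives $[A:A_0]\leq 2=2^{0+1}$ and $X^{A_0}=X$, a $\FF_2$-homology $0$-sphere.

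For the inductive step $m\geq 1$ the space $X$ is connected and orientable, so the action defines an orientation homomorphism $\omega\colon A\to\{\pm1\}$. Assuming $A\neq 1$, I would choose a central subgroup $Z=\la g\ra\leq A$ of order $2$; effectiveness gives $X^g\subsetneq X$, and by the fact recorded just before the lemma (built from Smith theory, Lefschetz' formula and Liao's theorem) $X^g$ is a smooth $\FF_2$-homology $s$-sphere with $2m-s$ even precisely when $g$ preserves orientation. If $\omega(g)=1$, then $s=2m'$ is even with $m'\leq m-1$, so $X^g$ is a nonempty even homology sphere; since $g$ is central, $X^g$ is $A$-invariant and $Z$ acts trivially on it, so $A$ acts on $X^g$ through $A/Z$, and the inductive hypothesis yields $\bar A_0\leq A/Z$ with $[A/Z:\bar A_0]\mid 2^{m'+1}\mid 2^m$ and $(X^g)^{\bar A_0}$ an even homology sphere; pulling back gives $A_0\geq Z$ with $X^{A_0}=(X^g)^{\bar A_0}$ and $[A:A_0]\mid 2^m$. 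If instead $\omega(g)=-1$, I would first pass to the orientation-preserving subgroup $A^+=\Ker\omega$ of index $2$; if $A^+=1$ then $A=\ZZ/2$ and $A_0=1$ works, with $X^{A_0}=X$ and $[A:A_0]=2$, while otherwise I pick a central involution $h\in A^+$, which automatically preserves orientation, run the previous argument inside $A^+$ acting on the even homology sphere $X^h$ (of dimension $2m''$ with $m''\leq m-1$), and obtain $A_0\leq A^+$ with $[A^+:A_0]\mid 2^{m''+1}\mid 2^m$, whence $[A:A_0]=2\,[A^+:A_0]\mid 2^{m+1}$.

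The main obstacle, and the reason the bound is $2^{m+1}$ rather than $2^m$, is that orientation behaviour is not inherited by fixed-point subspheres: even if every element of $A$ preserves the orientation of $X$, a given involution may reverse the orientation of an intermediate fixed locus $X^H$ and force an odd drop in dimension. Passing to $A^+$ costs one factor of $2$ but guarantees that the first involution selected acts orientation-preservingly, so that the dimension drops by an even amount; the remaining factors are then supplied inductively. The careful point to verify is that each recursive call lowers $m$ by at least one while consuming at most the allotted power of $2$, so that the contributions telescope to exactly $2^{m+1}$, and that at each level the fixed locus $X^g$ (resp. $X^h$) is again a \emph{closed} smooth $\FF_2$-homology even sphere, so the hypotheses of the inductive statement genuinely persist.
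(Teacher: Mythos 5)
Your proof is correct and follows essentially the same route as the paper's: reduce to an effective action, pass to the orientation-preserving subgroup (index dividing $2$), cut down by a central involution whose fixed set is an even-dimensional $\FF_2$-homology sphere of strictly smaller dimension via Smith theory and Liao's theorem, and recurse. The only differences are cosmetic — you induct on $m$ rather than on $|A|$, and you save a factor of $2$ when some central involution of $A$ already preserves orientation.
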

\begin{proof}
We use ascending induction on $|A|$.
The case $|A|=2$ being obvious, suppose that $|A|>2$ and that
the lemma is true for $2$-groups with less elements than $A$.
Suppose that $A$ acts smoothly on a compact smooth $\FF_2$-homology
$2r$-sphere $Y$. If the action of $A$ is not effective, then it factors
through a quotient of $A$, and applying the inductive hypothesis the lemma
follows. So assume that the action of $A$ on $Y$ is effective.
Let $A'\leq A$ be the subgroup consisting
of those elements whose action is orientation preserving. Then $[A:A']$
divides $2$. Let $A''\leq A'$ be a central subgroup isomorphic
to $\ZZ/2$. Then $Y^{A''}$ is a compact smooth $\FF_2$-homology even dimensional
sphere satisfying $\dim Y^{A''}\leq 2r-2$. Furthermore, $A'/A''$ acts smoothly on
$Y^{A''}$. To finish the proof, apply the inductive
hypothesis to this action.
\end{proof}

Let now $X$ be a smooth homology $2r$-sphere and suppose that a finite
abelian group $A$ acts smoothly on $X$. For any prime $p$ let $A_p\leq A$
denote the $p$-part.
By Lemma \ref{lemma:sphere-p-odd},
for any odd prime $p$ the fixed point set $X^{A_p}$ is an even dimensional $\FF_p$-homology
sphere.
By Lemma \ref{lemma:sphere-2}, $A_2$ has a subgroup $A_{2,0}$ whose index
divides $2^{r+1}$ and whose fixed point set is an even dimensional $\FF_2$-homology sphere
(in particular, it is nonempty). Replace $A_2$ by $A_{2,0}$ and define $A_0:=\prod_pA_p$,
so that $[A:A_0]$ divides $2^{r+1}$.

Suppose that for some prime $p$ the fixed point set $X^{A_p}$ is $0$-dimensional. Then $X^{A_p}$ consists of two points, $A$ acts on $X^{A_p}$, and there is a subgroup
$A'\leq A$ whose index divides $2$ and whose action on $X^{A_p}$ is trivial.
It follows that $|X^{A'}|\geq 2$ and we are done.

Suppose now that for each prime $p$ the fixed point set $X^{A_p}$ is an even dimensional
$\FF_p$-homology sphere of dimension at least $2$. In particular, $X^{A_p}$ is nonempty and
connected for every
$p$, and so is $X^{A_p'}$ for every subgroup $A_p'\leq A_p$ (since $X^{A_p'}$ is a
$\FF_p$-homology sphere and, given the inclusion $X^{A_p}\subseteq X^{A_p'}$, the dimension
of $X^{A_p'}$ is at least $2$). This property allows to use the same arguments in the proof
of Lemma \ref{lemma:primer-p-disk} to prove the following.

\begin{lemma}
\label{lemma:primer-p-esfera} Let $p$ be any prime, let
$x\in X^{A_p}$ be any point,
and let $\theta_1,\dots,\theta_r$
be the different real
irreducible representations of $A_p$ appearing in
$T_xX/T_xX^{A_p}$. Let $2l=\dim X^{A_p}$.
\begin{enumerate}
\item If $p=2$ then $r\leq 2m-2l$. If $p$ is odd then $r\leq m-l$.
\item There exists some $\gamma\in A_p$ and a subgroup
    $A_p'\leq A_p$ such that
    $X^{\gamma}=X^{A_p'}$ and $[A_p:A_p']$ divides
    $p^{[r/p]}$.
\end{enumerate}
\end{lemma}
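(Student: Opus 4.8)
The plan is to imitate the proof of Lemma \ref{lemma:primer-p-disk} almost verbatim, with $A$ replaced throughout by its $p$-part $A_p$ and with the $\FF_p$-acyclicity of fixed loci (the input that made the disk argument work) replaced by the fact recorded just before the lemma: in the case under consideration, for \emph{every} subgroup $A_p'\leq A_p$ the fixed locus $X^{A_p'}$ is an $\FF_p$-homology sphere of dimension at least $2$, hence nonempty and connected. This connectivity is precisely what the old argument extracted from acyclicity, so the rest transports unchanged.

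For part (1) I would argue by dimension counting. The normal space $T_xX/T_xX^{A_p}$ has real dimension $\dim X-\dim X^{A_p}=2m-2l$, and it splits as a sum of $r$ nonzero isotypical pieces, one for each distinct real irreducible $\theta_1,\dots,\theta_r$. If $p=2$ each such piece has real dimension at least $1$, giving $r\leq 2m-2l$. If $p$ is odd then $A_p$ has odd order, so it admits no nontrivial real character of order $2$; consequently every nontrivial real irreducible representation of $A_p$ is $2$-dimensional, and therefore $r\leq(2m-2l)/2=m-l$.

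For part (2) I would run the same averaging argument. Put $A_j=\Ker\theta_j$ and $e_j=\log_p[A_p:A_j]\geq 1$ (each $\theta_j$ being nontrivial), and define $I(\gamma)=\sum_{j\mid\gamma\in A_j}e_j$. Then $\sum_{\gamma\in A_p}I(\gamma)=\sum_{j=1}^r|A_j|e_j=\sum_{j=1}^r|A_p|e_j/p^{e_j}\leq(r/p)|A_p|$, using that $n\mapsto n/p^{n}$ is nonincreasing. Hence some $\gamma\in A_p$ satisfies $I(\gamma)\leq[r/p]$; fix such a $\gamma$ and set $A_p'=\bigcap_{j\mid\gamma\in A_j}A_j$. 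Then $[A_p:A_p']$ divides $\prod_{j\mid\gamma\in A_j}[A_p:A_j]=p^{I(\gamma)}$, which divides $p^{[r/p]}$, as required.

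It remains to prove $X^\gamma=X^{A_p'}$, and this is where the one new point enters. The inclusion $X^{A_p'}\subseteq X^\gamma$ is immediate from $\gamma\in A_p'$. For the reverse inclusion I would invoke connectivity: both $X^{A_p'}$ and $X^\gamma=X^{\langle\gamma\rangle}$ are $\FF_p$-homology spheres containing $X^{A_p}$, hence of dimension at least $2$ and therefore connected, so it suffices to show they have the same tangent space at $x$, i.e. $T_xX^\gamma\subseteq T_xX^{A_p'}$. Writing $T_xX/T_xX^{A_p}=V_1\oplus\dots\oplus V_r$ for the isotypical decomposition, one checks $T_xX^{A_p}\oplus\bigoplus_{j\mid\gamma\in A_j}V_j\subseteq T_xX^{A_p'}$, so it is enough to verify $T_xX^\gamma\cap V_i=\Ker(\theta_i^V(\gamma)-\Id)=\{0\}$ whenever $\gamma\notin A_i$. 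Since $V_i$ is isotypical and $A_p$ is abelian, this kernel is either $\{0\}$ or all of $V_i$, and the latter would force $\gamma\in\Ker\theta_i=A_i$, contradicting the choice of $i$. The only genuinely new ingredient compared with the disk case is thus the \emph{source} of connectivity: here it comes from Lemma \ref{lemma:sphere-p-odd} together with \cite[IV.4.3]{B}, which make the fixed loci $\FF_p$-homology spheres, combined with the standing hypothesis that these spheres have dimension at least $2$. I expect no real obstacle beyond carefully substituting this homology-sphere input for acyclicity; once connectivity is secured, the tangent-space comparison at $x$ closes the argument exactly as in Lemma \ref{lemma:primer-p-disk}.
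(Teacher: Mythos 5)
Your proposal is correct and is essentially the paper's own argument: the paper proves this lemma by remarking that the standing hypothesis (every $X^{A_p'}$ is an $\FF_p$-homology sphere of dimension at least $2$, hence nonempty and connected) lets one repeat the proof of Lemma \ref{lemma:primer-p-disk} verbatim, with connectivity playing the role that acyclicity played in the disk case. Your dimension count for part (1) and the averaging plus tangent-space comparison for part (2) are exactly the intended transcription.
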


By Lemma \ref{lemma:primer-p-esfera}
there exists some subgroup $A_2'\leq A_2$
satisfying $[A_2:A_2']\leq 2^{2m-2}$ and an element
$\gamma_2\in A_2'$ such that $X^{\gamma_2}=X^{A_2'}$
and, for each odd prime $p$, there exists some subgroup
$A_p'\leq A_p$ satisfying
$[A_p:A_p']\leq p^{[m-1/p]}$ and an element
$\gamma_p\in A_p'$ such that $X^{\gamma_p}=X^{A_p'}$.
Let $A'=\prod A_p'$ and let $\gamma=\prod\gamma_p$.
As in Section \ref{s:proof-thm:main-abelian} we have
$X^{\gamma}=X^{A'}$. The index $[A:A']$
divides $2^{m+1}f(m-1)$, so statement (1) of Theorem
\ref{thm:fixed-pt-spheres} is proved. Statement (2) follows
immediately from statement (1), because none of the odd prime
divisors of $f(m-1)$ is bigger than $m-1$.

\end{document}